\newcommand{\new}[1]{\emph{#1}}
\newcommand{\rhospe}{\rho_{\rm Spe}}
\newcommand{\rhoken}{\rho_{\rm Ken}}
\newcommand{\I}{{\mathbb I}}
\newcommand{\Edir}{E_{\rm dir}}
\renewenvironment{proof}[1][\proofname]{\par
\normalfont \topsep6\p@\@plus6\p@\relax
\trivlist
\item\relax
{\itshape
#1\@addpunct{.}}\hspace\labelsep\ignorespaces
}
\begin{document}

\title{Assortativity and bidegree distributions \\ on Bernoulli random graph superpositions}
\author{Mindaugas Bloznelis\inst{1} \and Joona Karjalainen\inst{2} \and Lasse Leskelä\inst{2}}
\institute{
Institute of Informatics, Vilnius University\\
Naugarduko 24, LT-03225 Vilnius, Lithuania 
\and
Department of Mathematics and Systems Analysis \\
School of Science, Aalto University  \\
Otakaari 1, FI-02015 Espoo, Finland
}

\date{\today}
\maketitle
\thispagestyle{plain}
\begin{abstract}
A probabilistic generative network model with $n$ nodes and $m$ overlapping layers is obtained as a superposition of $m$ mutually independent Bernoulli random graphs of varying size and strength.  When $n$ and $m$ are large and of the same order of magnitude, the model admits a sparse limiting regime with a tunable power-law degree distribution and nonvanishing clustering coefficient. In this article we prove an asymptotic formula for the joint degree distribution of adjacent nodes. This yields a simple analytical formula for the model assortativity, and opens up ways to analyze rank correlation coefficients suitable for random graphs with heavy-tailed degree distributions. We also study the effects of power laws on the asymptotic joint degree distributions.
\end{abstract}

\keywords{
joint degree distribution \and bidegree distribution \and degree--degree distribution \and empirical degree distribution \and  degree correlation \and transitivity \and statistical network model \and \Erdos--Rényi graph \and  random intersection graph \and power law
}

\section{Introduction}
\label{sec:Introduction}
\paragraph{Overview and objectives.}
Questions in technology, life sciences, and economics are often related to large systems of nodes connected via pairwise interactions which involve uncertainty due to unpredictable node behavior and missing data.  Such uncertainties have been mathematically modeled and analyzed using random graph models of various complexity, including classical  independently linked and uniform random graphs \cite{Frieze_Karonski_2015}, stochastic block models and inhomogeneous Bernoulli graphs \cite{Abbe_2018_JMLR,Boguna_Pastor-Satorras_2003,Holland_Laskey_Leinhardt_1983}, random graphs with given degree distributions \cite{Bollobas_Janson_Riordan_2007,Molloy_Reed_1998}, and generative models involving  preferential attachment and rewiring mechanisms \cite{Albert_Barabasi_2002,Newman_2003_Structure}. While succeeding to obtain a good fit to degree distributions, most earlier models fail to capture second-order effects related to clustering and transitivity. Random intersection graphs \cite{Ball_Sirl_Trapman_2014,Bloznelis_2013,Britton_Deijfen_Lageras_Lindholm_2008,Godehardt_Jaworski_2001,Karjalainen_VanLeeuwaarden_Leskela_2018,Karonski_Scheinerman_Singer-Cohen_1999}, spatial preferential attachment models \cite{Aiello_etal_2009,Jacob_Morters_2017}, and hyperbolic random geometric graphs \cite{Bode_Fountoulakis_Muller_2015,Kiwi_Mitsche_2019,Krioukov_Papadopoulos_Kitsak_Vahdat_Boguna_2010} have been 
successful in extending the analysis to sparse graph models with tunable global clustering coefficient.
Despite remarkable methodological advances obtained in the aforementioned articles and related literature, most models of sparse random graphs still appear somewhat rigid in what comes to modeling finer second-order properties, such as correlations of the degrees of adjacent nodes \cite{Newman_2002} and 
 degree-dependent clustering coefficients \cite{AngelesSerrano_Boguna_2006_I,fountoulakis2021clustering,Vazquez_Pastor-Satorras_Vespignani_2002}. 

\paragraph{Main contributions.}
This article discusses a mathematical network model recently introduced in \cite{Bloznelis_Leskela_2019-12} which is motivated by the structure of social networks composed of a large number of overlapping communities \cite{Breiger_1974}. The model is generated as a superposition of mutually independent Bernoulli random graphs $G_1,\dots, G_m$ of variable size (number of nodes) and strength (link probability), which can be interpreted as \emph{layers} or \emph{communities}. The node sets of the layers are \emph{random} subsets of the underlying population of $n$ nodes. A key feature of the model is that the layer sizes and layer strengths are assumed to be correlated, which allows for example to model social networks with tunable frequencies of strong small communities and weak large communities.  The main contribution of this article is a rigorous mathematical analysis (Theorem~\ref{the:LimitingModeBidegree}) of the bidegree distribution (joint degree distribution of adjacent nodes) of the model in a limiting regime where the number of nodes $n$ and the number of layers $m$ are large and of the same order of magnitude. 
We note that such a regime admits bidegree distributions with statistically dependent marginals. Moreover, power laws can be introduced by choosing suitable layer types (Theorem \ref{the:LL1}). The bidegree distribution yields compact mathematical formulas for the model assortativity (Theorem~\ref{the:ModelAssortativity}) and rank correlations (Theorem~\ref{the:RankCorrelation}) of the adjacent node degrees. The latter theorem is suitable for modeling dependencies in heavy-tailed models with degrees having unbounded second moments. A proof outline of Theorem \ref{the:LimitingModeBidegree} was presented in the  preliminary version  \cite{10.1007/978-3-030-48478-1_5}. We complete the proof with weaker assumptions in this paper. Theorem \ref{the:LL1} and Lemmas \ref{the:truncation} and \ref{the:minfinity}, as well as the proofs for Lemmas \ref{the:BivariateMomentInequality} and \ref{the:DegreeMomentConvergence}, are new.

%Related work
\paragraph{Related work.}
Degree distributions, clustering, and percolation analysis of the model is presented in \cite{Bloznelis_Leskela_2019-12}. An analogous model where the node sets of the layers are deterministic has been studied in \cite{Yang_Leskovec_2014} in the context of overlapping community detection. Clustering coefficients and small subgraph frequencies for a special case with constant layer strengths have been analyzed in \cite{Grohn_Karjalainen_Leskela_2019-11-28,Karjalainen_VanLeeuwaarden_Leskela_2018,Karjalainen_Leskela_2017,Petti_Vempala_2018-02}.  In the special case with unit layer strengths, the layers become cliques and the model reduces to the passive random intersection graph introduced in \cite{Godehardt_Jaworski_2001}, with degree and clustering properties analyzed in \cite{Bloznelis_2013,Kurauskas_2015-04}. A network model with similar features has been recently presented in \cite{Vadon_Komjathy_VanDerHofstad_2019}. Assortativity and bidegree distributions have earlier been analyzed in the context of random intersection graph models \cite{Bloznelis_2017,Bloznelis_Jaworski_Kurauskas_2013}, inhomogeneous Bernoulli graphs and their extensions \cite{Boguna_Pastor-Satorras_2003,Mahadevan_Krioukov_Fall_Vahdat_2006,Sadeghi_Rinaldo_2014},
preferential attachment models \cite{Krot_OstroumovaProkhorenkova_2017,VanDerHofstad_Litvak_2014}, and configuration models in \cite{VanDerHofstad_Litvak_2014,VanDerHoorn_Litvak_2014,VanDerHoorn_Litvak_2015}. Extremal properties of bidegree correlations in general graphs have been reported in \cite{Czabarka_etal_2017,VanDerHofstad_Litvak_2014}.

\subsection{Notations}

\paragraph{Sets and numbers.}
The cardinality of a set $A$ is denoted $\abs{A}$.
Ordered pairs are denoted by $(i,j)$, and unordered pairs by  $\{i,j\}$.
Here $1(A)$ is defined to be one when statement $A$ is true, and zero otherwise.
We denote $[n] = \{1,\dots,n\}$ and $\Z_+ = \{0,1,2,\dots\}$.
The falling factorial is denoted $(x)_r = x(x-1) \cdots (x-r+1)$.

\paragraph{Graphs.}
A graph is defined as a pair $G = (V(G), E(G))$ where $V(G)$ is the set of nodes, and $E(G)$ is the set of edges (unordered node pairs). Nodes $i$ and $j$ are called adjacent if  $\{i,j\} \in E(G)$. The set of nodes adjacent to node $i$ is denoted $N_G(i) = \{j \in V(G): \{i,j\} \in E(G)\}$. The degree of $i$ is denoted $\deg_G(i) = \abs{N_G(i)}$. The set of ordered pairs of adjacent nodes is denoted by $\Edir (G) = \{(i,j) \in V(G) \times V(G) : \{i,j \} \in E(G)\}$. 

\paragraph{Probability.}
%All probability measures are defined with the respect to the Borel sigma-algebra of the underlying space. All countable spaces are equipped with the discrete topology.
%and all subsets of the real line with the Euclidean topology.
For a probability measure on a countable space we denote $f(x) = f(\{x\})$ and $\int \phi \, df = \sum \phi(x) f(x)$.
The Dirac measure at $x$ is denoted by $\delta_x$. The binomial distribution is denoted by $\Bin(x,y)(s) = \binom{x}{s} (1-y)^{x-s} y^s$, and the Poisson distribution by $\Poi(\lambda)(s) = e^{-\lambda} \frac{\lambda^s}{s!}$. The product and the convolution of probability measures $f$ and $g$ are denoted by $f \otimes g$ and $f \ast g$, respectively.

\section{Assortativity and bidegree distributions}

\subsection{Empirical quantities}
Let $G$ be a graph with a finite node set and a nonempty link set. Here $G$ is viewed as a nonrandom graph or a fixed sample of a random graph.
The (empirical) \new{degree distribution} of $G$ is a probability measure on $\Z_+$ defined by
\[
 f_G(s)
 \weq \frac{1}{\abs{V(G)}} \sum_{i \in V(G)} 1 \big( \deg_G(i) = s \big),
\]
and represents the probability distribution of the random variable $\deg_G(I)$ where $I$ is a random variable obtained by sampling a node uniformly at random. The (empirical) \new{bidegree distribution} of $G$ with a nonempty link set is a probability measure on $\Z_+^2$ defined by
%\[
% f_G^{(2)}(s,t)  \weq \frac{1}{2 \abs{E(G)}} \sum_{(i,j): \{i,j\} \in E(G)} 1 \big( \deg_G(i) = s, \, \deg_G(j) = t \big).
% \]
 \[
 f_G^{(2)}(s,t)  \weq \frac{1}{2 \abs{E(G)}} \sum_{(i,j) \in \Edir (G)} 1 \big( \deg_G(i) = s, \, \deg_G(j) = t \big).
\]
This is the joint probability distribution of the pair $(\deg_G(I), \deg_G(J))$ obtained by sampling $(I,J)$ uniformly at random from the set of all ordered node pairs adjacent in $G$. 
%We note that $f^{(2)} = \law( \deg_G(I), \deg_G(J) )$ is the joint law of $\deg_G(I)$ and $\deg_G(J)$ in which $(I,J)$ is an ordered node pair sampled uniformly from $E_{\rm dir}(G) = \{ (i,j) \in V(G) \times V(G): \{i,j\} \in E(G)\}$.
A simple computation shows that both marginals of the bidegree distribution are equal to the size-biased degree distribution
\begin{equation}
\label{eq:sizebias1}
 f_G^*(s)
 = \frac{s f_G(s)}{\sum_t t f_G(t)}.
\end{equation}
The Pearson correlation coefficient of the bidegree distribution is called the (empirical) \new{assortativity} of graph $G$ and can be written as
\[
 \Cor_G(\deg_G(I), \deg_G(J))
 \weq \frac{\sum_{s,t} st f_G^{(2)}(s,t) - \left( \sum_s s f_G^*(s) \right)^2}
 {\sum_s s^2 f_G^*(s) - \left( \sum_s s f_G^*(s) \right)^2}.
 %\weq \frac{\E D^*_1 D^*_2 - \left( \E D^*_1 \right)^2}{\E (D_1^*)^2 - \left( \E D^*_1 \right)^2}.
\]

\subsection{Model quantities}

Let $G$ be a random graph such that $V(G)$ is nonrandom and finite, and $E(G)$ is nonempty with positive probability.
The \new{model degree distribution} of $G$ is defined by
\begin{equation}
 \label{eq:ModelDegree}
 f(s)
 \weq \pr\big( \deg_G(I) = s \big),
\end{equation}
where $I$ is a random node in $V(G)$, selected uniformly at random and independently of $E(G)$. The \new{model bidegree distribution} is defined by
\begin{equation}
 \label{eq:ModelBidegree}
% f_2(s,t) \weq \pr \Big( \deg_G(I) = s, \, \deg_G(J) = t \, \big| \, IJ \in E(G) \Big),
 f_2(s,t) \weq \pr \Big( \deg_G(I) = s, \, \deg_G(J) = t \, \big| \, (I,J) \in \Edir (G) \Big),
\end{equation}
where $(I,J)$ is an ordered pair of distinct nodes of $V(G)$, selected uniformly at random and independently of $E(G)$. By simple computations one may verify that $f_2(t,s) = f_2(s,t)$, and that both marginals of the model bidegree distribution are equal to the size-biased model degree distribution
\begin{equation}
\label{eq:sizebias2}
% f^*(s) = \frac{s f(s)}{\sum_t t f(t)}.
f_1^*(s) = \frac{s f(s)}{\sum_t t f(t)}.
\end{equation}
The Pearson correlation coefficient of the model bidegree distribution is called the \new{model assortativity}, and can be written as
\begin{equation}
 \label{eq:ModelAssortativity}
% \Cor^*(D_I, D_J) \weq \frac{\E^* D_I D_J - \left(\E^* D_I \right)^2}{\E^* D_I^2 - \left( \E^* D_I \right)^2}, \\
\Cor^*(\deg_G(I), \deg_G(J)) \weq \frac{\E^* \deg_G(I) \deg_G(J) - \left(\E^* \deg_G(I) \right)^2}{\E^* \deg_G(I)^2 - ( \E^* \deg_G(I) )^2},
\end{equation}
%where $D_I = \deg_G(I)$ and $D_J = \deg_G(J)$ for $(I,J)$ as above, and $\E^*$ refers to conditional expectation given $\{ IJ \in E(G)\}$.
where $(I,J)$ is an ordered pair of distinct nodes of $V(G)$ selected uniformly at random as above and  $\E^*$ refers to the conditional expectation given the event $\{ (I,J) \in \Edir (G)\}$. 

\section{Random graph superposition model}
\label{sec:Model}
%\begin{LLcomm}
A multilayer network model $G_n$ with $n$ nodes and $m$ layers of sizes $X_{n,k}$ and strengths $Y_{n,k}$, $k=1,\dots,m$, is defined by: (i) sampling for each $k$ a node set $V(G_{n,k})$ uniformly at random from the subsets of $\{1,\dots,n\}$ of size $X_{n,k}$, (ii) linking each node pair in $V(G_{n,k})$ independently with probability $Y_{n,k}$, and (iii) aggregating the layers $G_{n,k}$ by setting 
\[
 V(G_n) \weq \{1,\dots,n\}
\]
and
\[
 E(G_n) \weq E(G_{n,1}) \cup \cdots \cup E(G_{n,m}).
\]
The layers are assumed to be mutually independent, but the size and strength of a layer may be correlated. Formally, the model is defined by a list
\[
 \Big( (G_{n,1},X_{n,1},Y_{n,1}), \dots, (G_{n,m},X_{n,m},Y_{n,m}) \Big)
\] 
of mutually independent random variables with values in $\cG_n \times \{0,\dots,n\} \times [0,1]$, where $\cG_n$ denotes the set of undirected graphs with node set contained in $\{1,\dots,n\}$. We assume that conditionally on $(X_{n,k},Y_{n,k})$, the probability distribution of $V(G_{n,k})$ is uniform on the subsets of $\{1,\dots,n\}$ of size $X_{n,k}$, and conditionally on $(V(G_{n,k}), X_{n,k}, Y_{n,k})$,
the probability distribution of $E(G_{n,k})$ is such that each node pair of $V(G_{n,k})$ is linked with probability $Y_{n,k}$, independently of other node pairs. We obtain a rich class of generative probabilistic models when we assume that for every $m$ and $n$ the layer types $(X_{n,1},Y_{n,1}), \dots, (X_{n,m}, Y_{n,m})$ are mutually independent and (identically) distributed according to a probability measure $P^{(n)}$ on $\{0,\dots,n\} \times [0,1]$.

A large network is modeled as a sequence of network models of the above type indexed by the number of nodes $n=1,2,\dots$ so that the number of layers $m = m_n$ tends to infinity as $n \to \infty$. To obtain a sparse network admitting tractable limiting formulas with rich expressive power, we shall focus on the sparse parameter regime where $m = \operatorname{\Theta}(n)$ and there exists a probability measure $P$ on $\{0,1,\dots\} \times [0,1]$ which approximates the layer type distribution according to $P^{(n)} \to P$ weakly,
together with the convergence of suitable cross moments $P^{(n)}_{rs} \to P_{rs}$, where we use the shorthand notations
\[
 P^{(n)}_{rs}
 \weq \E\Big( (X_{n,k})_r \, Y_{n,k}^s \Big),
 \qquad
 P_{rs}
 \weq \E\Big( (X)_r Y^s \Big).
\]
Here and below $(X,Y)$ stands for a generic  P-distributed random vector.
%where $(X,Y)$ is a generic $P$-distributed random variable.
\begin{remark}
\label{rem:CrossMoments}
We note that $P^{(n)}_{10} = \E (X_{n,k})$ denotes the expected layer size in the model with scale parameter $n$. To appreciate the relevance of other cross moments, we note that for any graph $R$ with $V(R) \subset \{1,\dots,n\}$, $\abs{V(R)}=r$ and $\abs{E(R)}=s$, the probability that $G_{n,k}$ contains $R$ as subgraph equals $(n)_r^{-1} P^{(n)}_{rs}$, and the expected number of $R$-isomorphic subgraphs in $G_{n,k}$ equals $P^{(n)}_{rs}$ divided by the number of automorphisms of $R$. Especially, the expected numbers of links, 2-stars, and 3-stars contained in any particular layer are given by $\frac12 P^{(n)}_{21}$, $\frac12 P^{(n)}_{32}$, and $\frac16 P^{(n)}_{43}$, respectively. 
\end{remark}

When the number of layers is of the same order as the number of nodes  $\frac{m}{n} \to \mu \in (0,\infty)$, $P^{(n)} \to P$ weakly, and $P^{(n)}_{10} \to P_{10} \in (0,\infty)$, we obtain a sparse network $G_n$ with the model degree distribution \eqref{eq:ModelDegree} converging  weakly \cite{Bloznelis_Leskela_2019-12} to a compound Poisson distribution
\begin{equation}
 \label{eq:LimitingModelDegree}
\bar f_1 \weq \CPoi(\lambda, g)
\end{equation}
with rate parameter $\lambda = \mu P_{10}$ and increment distribution 
\begin{equation}
 \label{eq:LimitingModelDegreeIncrement}
 g(s) 
 \weq \int_{\Z_+ \times [0,1]} \Bin(x-1,y)(s) \, \frac{x P(dx,dy)}{P_{10}},
 \qquad s \in \Z_+.
\end{equation}
%\LLnote{Write $\int_{\Z_+ \times [0,1]}$ above.}
In other words, the limiting model degree distribution $\bar f_1$ represents the law of $ \sum_{k=1}^\Lambda H_k$, where $\Lambda,H_1,H_2,\dots$ are mutually independent random integers and such that $\law(\Lambda) = \Poi(\lambda)$ and $\law(H_k) = g$.

\section{Main results}

\subsection{Bidegree distribution}

The result below characterizes the limiting bidegree distribution in the random Bernoulli graph superposition model.
The limiting bidegree distribution can be represented as the joint law of random variables
\begin{equation}
 \label{eq:BidegreeRepresentation}
 \big(D_1^*, D_2^* \big)
 \ = \ \big(1 + D_1 + D'_1, \, 1 + D_2 + D'_2 \big),
\end{equation}
where $D_1$, $D_2$, and $(D_1',D_2')$ are mutually independent and such that $D_1$ and $D_2$ follow the limiting degree distribution $\bar f_1$ defined by \eqref{eq:LimitingModelDegree}.  $D_1'$ and $D_2'$ are defined with the help of an auxiliary random vector $(X',Y')$ taking values in $\mathbb{Z}_+ \times [0,1]$ and having the distribution $\frac{(x)_2 y \, P(dx,dy)}{P_{21}}$.
%Namely, $D_1'$ and $D_2'$ are conditionally independent and $\Bin(X'-2,Y')$-distributed given $(X',Y')$. We note that $X'$ and $Y'$ represent the size and strength of a random layer which links a particular node pair $\{i,j\}$, $D_1'$ and $D'_2$ represent the number of additional neighbors of $i$ and $j$ inside the common layer, and $D_1$ and $D_2$ represent the additional neighbors from other layers. 
Namely, given $(X',Y')$ the random variables $D_1'$ and $D_2'$ are conditionally independent and both are $\Bin(X'-2,Y')$-distributed. In order to explain the origin of  $(X',Y')$ we fix a vertex pair $\{i,j\}$ and number $k$. Then  $(X',Y')$ represents the limit (in distribution) of the size and strength of the layer $G_{n,k}$ conditioned on the event that  $\{i,j\}\in E(G_{n,k})$ (in this case we say that the edge 
$\{i,j\}$ is produced by $G_{n,k}$). Typically, an edge $\{i,j\}\in E(G_n)$ is produced by a single layer. The (limiting) numbers of neighbours of $i$ and $j$ produced by  this particular layer are 
represented by $D_1'$ and $D_2'$, while the numbers of neigbours produced by other layers are represented by $D_1$ and $D_2$.

The joint distribution of $(D_1^*, D_2^*)$ defined by \eqref{eq:BidegreeRepresentation} can be written as
\begin{equation}
 \label{eq:Bidegree}
% f_{2}  \weq \delta_{(1,1)} \ast (f \otimes f) \ast f'_{2}
\bar f_{2}  \weq \delta_{(1,1)} \ast (\bar f_1 \otimes \bar f_1) \ast f'_{2}
\end{equation}
where $\ast$ refers to the convolution of probability measures on $\Z_+^2$, and $f'_{2}$ stands for the distribution of $(D_1',D_2')$,
%is a probability measure on $\Z_+^2$ defined by
\begin{equation}
 \label{eq:BidegreePrime}
  f'_{2}(s,t) \weq \int_{\Z_+ \times [0,1]} \Bin(x-2,y)(s) \Bin(x-2,y)(t) \frac{(x)_2 y \, P(dx,dy)}{P_{21}}.
\end{equation}
The main intuition behind \eqref{eq:BidegreeRepresentation} and \eqref{eq:Bidegree} is that the overlap of the edges that contribute to $D_1$ and $D_1'$ (or $D_2$ and $D_2'$) is negligible. $D_1$ and $D_2$ represent the numbers of neighbours when the common layer is removed, and these numbers are asymptotically independent. Moreover,  removing the common layer does not affect the asymptotic degree distribution $\bar f_1$. The edge between nodes $I$ and $J$ in the definition \eqref{eq:ModelBidegree} is represented by $\delta_{(1,1)}$.

Theorem~\ref{the:LimitingModeBidegree} below characterizes the large-scale limiting behaviour of the bidegree distribution of $G_n$,
\[
% f_{2,n}(s,t)  \weq \pr\Big( \deg_{G_n}(1) = s, \deg_{G_n}(2) = t \ \big| \ 12 \in E(G_n) \Big).
% f_{2,n}(s,t)  \weq \pr\Big( \deg_{G_n}(1) = s, \deg_{G_n}(2) = t \ \big| \ \{1,2\} \in E(G_n) \Big). }
 f_{2,n}(s,t)  \weq \pr\Big( \deg_{G_n}(1) = s, \deg_{G_n}(2) = t \ \big| \ (1,2) \in \Edir (G_n) \Big). 
\]
% of the random graph superposition model, defined by substituting $G_n$ in place of $G$ in \eqref{eq:ModelBidegree}. 
We note that the probability above is the same as in (\ref{eq:ModelBidegree}). Indeed, the probability distribution of $G_n$ is invariant under permution/relabeling of its nodes. Therefore the random  pair $(I,J)$ can be replaced  in (\ref{eq:ModelBidegree}) by a nonrandom one.

\begin{theorem}
\label{the:LimitingModeBidegree}
Let $m,n \to \infty$. Assume that $\frac{m}{n} \to \mu \in (0,\infty)$,  and that $P^{(n)} \to P$ weakly for some probability measure $P$ on $\Z_+ \times [0,1]$.
\begin{enumerate}[(i)]

\item 
\label{ite:Weak}
%If $P^{(n)}_{rs} \to P_{rs} \in (0, \infty)$ for $rs=10,21$, then $f_{2,n} \to f_2$ weakly, where the limiting bidegree distribution $f_2$ is defined by \eqref{eq:Bidegree}. 
If $P^{(n)}_{rs} \to P_{rs} \in (0, \infty)$ for $rs=10,21$, then $f_{2,n} \to \bar f_2$ weakly, where the limiting bidegree distribution $\bar f_2$ is defined by \eqref{eq:Bidegree}. 

\item
\label{ite:Wasserstein}
If in addition, $P^{(n)}_{rs} \to P_{rs} < \infty$ for $rs = 32,43$, then $\int \phi \, d f_{2,n} \to \int \phi \, d \bar f_{2}$
for all $\phi: \Z_+^2 \to \R$ such that $\abs{\phi(x,y)} \le c(1 + x^2 + y^2)$ for some constant $c< \infty$.
\end{enumerate}
\end{theorem}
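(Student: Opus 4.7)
The plan is to prove \eqref{eq:Bidegree} by conditioning on which layer produces the edge $\{1,2\}$, using size-biasing to identify the law of $(X',Y')$, and showing that the residual degree contributions from the remaining layers are asymptotically independent with marginals $\bar f_1$. Call layer $k$ \emph{producing} if $\{1,2\} \in E(G_{n,k})$. Each layer produces independently with probability $P_{21}^{(n)}/(n)_2 \sim P_{21}/n^2$, so $\pr(\{1,2\}\in E(G_n)) = 1-(1-P_{21}^{(n)}/(n)_2)^m \sim \mu P_{21}/n$, while a union bound gives the probability of at least two producing layers as $O(1/n^2)$, hence $O(1/n)$ conditional on the edge being present. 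I may therefore condition on exactly one producing layer $k^*$. Bayes' rule identifies the conditional law of $(X_{n,k^*}, Y_{n,k^*})$ as the size-biased measure $(x)_2 y \, P^{(n)}(dx,dy)/P_{21}^{(n)}$, which converges weakly to the law of $(X',Y')$ in \eqref{eq:BidegreePrime}. Given this type, the other $X_{n,k^*}-2$ members of $V(G_{n,k^*})$ are uniform in $\{3,\ldots,n\}$ and each is linked to node 1 (respectively node 2) independently with probability $Y_{n,k^*}$, so the numbers of extra neighbours of $1$ and $2$ inside the producing layer are conditionally independent $\Bin(X_{n,k^*}-2, Y_{n,k^*})$ variables, producing the $\delta_{(1,1)} \ast f'_2$ factor of \eqref{eq:Bidegree}.

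Writing $\deg_{G_n}(i) = \deg_{G_{n,k^*}}(i) + D_i^{(-k^*)}$ up to overlap corrections that vanish in probability, it remains to show that $(D_1^{(-k^*)}, D_2^{(-k^*)})$ converges jointly to $\bar f_1 \otimes \bar f_1$. Marginal convergence of each coordinate to $\bar f_1$ follows from the single-node limit \eqref{eq:LimitingModelDegree} applied to the $m-1$ residual layers, since removing a single layer perturbs the limiting compound Poisson distribution by $o(1)$. For joint asymptotic independence, the only source of correlation between $D_1^{(-k^*)}$ and $D_2^{(-k^*)}$ is a non-producing layer that nevertheless contains both nodes 1 and 2; the unconditional expected number of such layers is $m(P_{20}^{(n)}-P_{21}^{(n)})/(n)_2 = O(1/n)$, and a second-moment computation against the producing event shows this conditional expected count also tends to zero. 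With conditional probability $1-o(1)$ the residual layers touching node 1 are therefore layer-disjoint from those touching node 2, so the residual contributions factor. Combining this with the producing-layer analysis above yields part (i).

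For part (ii), I would upgrade weak convergence to convergence of quadratically bounded functionals by establishing uniform integrability of $\deg_{G_n}(1)^2 + \deg_{G_n}(2)^2$ under the conditional measure $\pr^*$. A factorial-moment expansion rewrites $\E^* \deg_{G_n}(1)^2$ and $\E^* \deg_{G_n}(1)\deg_{G_n}(2)$ as expected numbers of edges, 2-stars, and 3-stars rooted at the distinguished adjacent pair $(1,2)$; these counts are linear combinations of $P_{10}^{(n)}, P_{21}^{(n)}, P_{32}^{(n)}, P_{43}^{(n)}$ together with cross-layer products that factor through $P_{21}^{(n)}$. The assumed convergence of these cross moments supplies the uniform bound and simultaneously identifies the limiting second moments of $\bar f_2$. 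I anticipate the main obstacle to be the asymptotic independence step of part (i): the conditioning event $\{1,2\}\in E(G_n)$ has probability of order $1/n$, so it amplifies expectations of rare events by a factor of $n$, and one must verify that the $O(1/n)$ probability of an overlapping non-producing layer is not inflated after conditioning into a non-vanishing correction to the joint law; this is where the precise interplay between the cross-moment hypotheses and the factorization across independent layers becomes essential.
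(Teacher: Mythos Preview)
Your overall architecture for part~(i) matches the paper's: condition on a single producing layer, identify the size-biased law $(x)_2 y\,P(dx,dy)/P_{21}$, show the in-layer contributions give $\delta_{(1,1)}\ast f_2'$, and then argue that the residual degrees $(D_1^{(-k^*)},D_2^{(-k^*)})$ are asymptotically independent with marginals $\bar f_1$. The first three steps are fine and essentially what the paper does.

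The gap is in your asymptotic-independence argument. You claim the expected number of non-producing layers containing both nodes $1$ and $2$ is $m(P_{20}^{(n)}-P_{21}^{(n)})/(n)_2 = O(1/n)$, but the hypotheses of part~(i) give you only $P_{10}^{(n)}\to P_{10}$ and $P_{21}^{(n)}\to P_{21}$; nothing controls $P_{20}^{(n)}=\E[(X_{n,k})_2]$. Since $Y$ can be small when $X$ is large, $\E[(X)_2 Y]$ finite does not force $\E[(X)_2]$ finite, and in fact $\E[(X_{n,k})_2]$ may grow with $n$. Your $O(1/n)$ bound therefore fails in general. The paper deals with exactly this obstacle by a truncation: first replace each layer by one with size $\hat X_{n,k}=X_{n,k}\I(X_{n,k}\le M)$, prove the joint independence in that truncated model (where the relevant second moment is trivially bounded by $M^2$), and then let $M\to\infty$, controlling the error via the uniform integrability of $X_{n,1}$ that follows from $P_{10}^{(n)}\to P_{10}$. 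That two-step device is the missing idea in your proposal.

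A secondary point: even on the event $\{Z_1\cap Z_2=\emptyset\}$ the factorization ``residual contributions factor'' is not automatic. Conditioning on $Z_1=A_1$ changes the distribution of the layers in $A_1^c$ (they are now conditioned not to contain node~$1$), so the degree of node~$2$ in those layers is computed under a perturbed layer-type law. The paper handles this with an explicit coupling to an auxiliary graph $G^*$ whose layer types follow the conditional distribution, and checks that this perturbation is asymptotically negligible. Your sketch elides this step.

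For part~(ii) your plan is broadly consistent with the paper's, which proves convergence of the third moment of the model degree distribution (so that the second moment of the size-biased marginal converges) and then invokes Skorohod coupling plus dominated convergence. Your factorial-moment outline would reproduce essentially the same computation, but note that the paper routes it through the \emph{unconditional} degree moments and the size-biasing identity $\E[(D^*)^2]=\E[D^3]/\E[D]$ rather than directly expanding conditional star counts.
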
 

The approximation \eqref{ite:Weak} in the weak topology assumes that the mean layer size $P^{(n)}_{10}$ and the mean number of edges $\frac12 P^{(n)}_{21}$ per layer converge to nonzero finite limits. When in addition $P^{(n)}_{32}$ and $P^{(n)}_{43}$ converge to finite limits, we obtain a stronger approximation \eqref{ite:Wasserstein} in the Wasserstein-2 metric \cite[Theorem 6.9]{Villani_2009}), which is used to derive approximations for the model assortativity in Theorem~\ref{the:ModelAssortativity}.

We are particularly interested in network models featuring: (i) tunable power law degree distributions and 
 (ii) tunable frequencies of 
 strong small communities and weak large communities.
 To meet requirement (ii) we let the marginals $X$ and $Y$ of $P$ be negatively correlated. For simplicity we 
 will consider the case where the asymptotic community strength $Y$ is a negative power of the layer size $X$.
 To meet requirement (i) we choose a suitable distribution for 
$X$. 
We mention that the asymptotic power law degree distribution and clustering properties in this setup were shown in   \cite{Bloznelis_Leskela_2019-12}. In Theorem \ref{the:LL1} below we establish the first order asymptotic  of the limiting bidegree distribution ${\bar f}_2(s,t)$ as $s,t\to+\infty$.
%
%We may introduce power laws to the asymptotic model distributions by choosing a suitable distribution for $X$, and by letting $Y$ be a deterministic function of $X$.
%The effects on the degree distribution were studied in \cite{Bloznelis_Leskela_2019-12}. Our next result shows that the bidegree distribution admits power law marginals when the layer type distribution has a power law form. The intuition behind this result is that the largest contribution to the degree $D_1^* = 1+D_1+D_1'$ is made by the term $D_1'$, i.e., the number of additional neighbours inside the layer that links nodes 1 and 2. 

\begin{theorem}
\label{the:LL1}
Denote by $(D_1^*, D_2^*)$ a random vector with the asymptotic bidegree distribution ${\bar f}_2$. Assume that the limiting layer type distribution equals $P(dx,dy) = p(dx)\delta_{q(x)} (dy)$ with
\begin{gather*}
p(x) \weq (a+o(1))x^{-\alpha} \text{ as } x\to \infty \quad \, \text{and} \, \quad q(x) \weq \min \{1, b x^{-\beta} \}
\end{gather*}
with exponents $\alpha>2$, $\beta \in (0,1)$, $\alpha+\beta>3$ and constants $a,b > 0$. If $\beta=0$, then $q(x)$ is a constant and we require $b<1$.
Then as $t \to \infty$,
\begin{align}
\label{2020-02-24++5}
\pr(D_1^* = t) \weq (1+o(1))c' t^{-\frac{\alpha-2}{1-\beta}},
\end{align}
where $c' \weq ab^{\frac{\alpha-2}{1-\beta}} (1-\beta)^{-1} P_{21}^{-1}$.
Denote $\delta_t=t^{1/2}\ln^4(2+t)$.  For $t_1,t_2\to+\infty$ such that $(t_2-t_1)/\delta_{t_2}\to+\infty$ we have
\begin{gather}
\label{2020-02-24++9}
\pr ( D_1^* = t_1, D_2^* = t_2 ) \weq (1+o(1))c''(t_2-t_1)^{-1-\frac{\alpha-2}{1-\beta}} t_1^{-\frac{\alpha-2}{1-\beta}},
\end{gather}
where $c'' \weq \mu a^2 b^{ \frac{2(\alpha-2)}{1-\beta}} (1-\beta)^{-2} P_{21}^{-1}$.
\end{theorem}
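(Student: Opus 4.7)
The plan is to exploit the representation $(D_1^*, D_2^*) = (1 + D_1 + D_1', 1 + D_2 + D_2')$ from \eqref{eq:BidegreeRepresentation} together with the explicit form $P(dx,dy) = p(dx)\delta_{q(x)}(dy)$. The key quantitative tool will be a local central limit theorem for the binomial $B(x) := \Bin(x-2, q(x))$, which has mean $\mu(x) = (x-2)q(x)$ and variance $\sigma^2(x) \approx \mu(x) \approx b x^{1-\beta}$ for large $x$.

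For the marginal \eqref{2020-02-24++5}, I would first analyze the tail of $D_1'$ via
\[
\pr(D_1' = t) \weq \int \Bin(x-2, q(x))(t) \, \frac{(x)_2 q(x) p(x)}{P_{21}} \, dx.
\]
A local CLT approximation shows that the integrand is concentrated in a window of width $\sqrt{t}$ around the unique $x_t$ satisfying $\mu(x_t) = t$, namely $x_t \approx (t/b)^{1/(1-\beta)}$. Changing variable to $u = \mu(x)$ (so that $du/dx \approx b(1-\beta)x^{-\beta}$) turns the mixing density into $\propto u^{(2-\alpha)/(1-\beta)}$ and yields $\pr(D_1' = t) \sim c' t^{-(\alpha-2)/(1-\beta)}$ with $c'$ as stated. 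A standard subexponential-sum (``one big jump'') argument then gives $\bar f_1(s) \sim \lambda c_g s^{-(\alpha-1-\beta)/(1-\beta)}$, where $c_g$ is obtained from an analogous binomial-mixture calculation for $g$; the exponent is exactly one greater than that of $D_1'$, so $D_1$ contributes only through the shift and a routine convolution estimate delivers \eqref{2020-02-24++5}.

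For the joint \eqref{2020-02-24++9}, the conditional law of $(D_1', D_2')$ given $(X', Y')$ is i.i.d.\ $\Bin(X'-2, Y')$ and is independent of $(D_1, D_2)$. Writing $F(x; n) := \pr(D_1 + \Bin(x-2, q(x)) = n)$, this yields
\[
\pr(D_1^* = t_1, D_2^* = t_2) \weq \E\bigl[F(X'; t_1 - 1) \, F(X'; t_2 - 1)\bigr].
\]
The hypothesis $(t_2 - t_1)/\delta_{t_2} \to \infty$ with $\delta_{t_2} = \sqrt{t_2}\ln^4(2+t_2)$ rules out that both factors simultaneously sit at the Gaussian peak of $B(X')$, whose width is $O(\sqrt{t_2})$. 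The dominant regime is therefore $X'$ with $\mu(X') \approx t_1$: in this regime the first factor is at the local CLT peak $\approx (2\pi \sigma^2(X'))^{-1/2}$, while the second factor lies in the heavy-tail regime where $D_1$ must supply a ``big jump'' of size $\approx t_2 - t_1$, giving $F(X'; t_2 - 1) \sim \bar f_1(t_2 - 1 - \mu(X')) \sim \lambda c_g (t_2 - t_1)^{-\gamma}$ with $\gamma = 1 + (\alpha-2)/(1-\beta)$. Changing variable to $\mu = \mu(X')$ and noting that the Gaussian peak integrates to unit mass against the slowly-varying mixing density (whose value at $\mu = t_1$ gives precisely $c' t_1^{-(\alpha-2)/(1-\beta)}$) produces the leading constant $\lambda c_g c'$, which matches $c''$ by direct calculation.

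The main technical obstacle is uniform error control. One needs: (a) a local CLT for $B(x)$ valid uniformly over $x$ in a polynomial-sized window around $x_{t_1}$; (b) an asymptotic for $\bar f_1 * B(x)$ in the heavy-tail regime $n - \mu(x) \gg \sigma(x)$, uniform in both $n$ and $x$; and (c) a proof that contributions from $X'$ outside the critical window (where, e.g., $\mu(X') \approx t_2$, forcing $F(X'; t_1 - 1)$ to be sub-Gaussian small) are negligible. The $\ln^4$ factor in $\delta_{t_2}$ is exactly the logarithmic slack required by the standard large-deviation estimates used in (b) and (c); none of these ingredients is individually out of reach, but assembling them into a clean asymptotic equivalence is the bulk of the work.
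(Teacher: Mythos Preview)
Your proposal is correct and identifies the same dominant event as the paper: both binomials $D_1',D_2'$ land near $t_1$, so that $D_1\approx 0$ and $D_2$ supplies the ``big jump'' of size $t_2-t_1$, giving the leading term $\bar f_1(t_2-t_1)\cdot c' t_1^{-(\alpha-2)/(1-\beta)}$ with $\bar f_1(t_2-t_1)\cdot c'=c''(t_2-t_1)^{-1-(\alpha-2)/(1-\beta)}$. The organization, however, differs in a way worth noting.

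The paper does not condition on $X'$. Instead it expands
\[
\sum_{(i,j)}\pr(D_1'=i,D_2'=j)\,\pr(D_1=t_1-i)\,\pr(D_2=t_2-j)
\]
and proves the key off-diagonal bound $\pr(D_1'=s,D_2'=t)\le e^{-c\ln^8 t}$ whenever $s+\delta_s+\delta_t<t$, obtained purely from Chernoff inequalities. This one estimate lets them discard every region except a thin diagonal strip $A_4$ around $i\approx j\approx t_1$; on $A_4$ they freeze $\pr(D_2=t_2-j)\approx\pr(D_2=t_2-t_1)$ and sum out the remaining variables. No local CLT is needed anywhere: only one-sided Chernoff upper bounds, plus the already-established pointwise power laws for $\pr(D_1'=t)$ and $\bar f_1(t)$ (quoted as Lemma~\ref{the:kovo20-3new} and Theorem~\ref{the:R1new}).

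Your route via $\E[F(X';t_1-1)F(X';t_2-1)]$ is cleaner to state and makes the conditional independence explicit, but it trades the single Chernoff-based off-diagonal bound for a package of uniform estimates: a local CLT for $B(x)$ uniform in $x$, a heavy-tail asymptotic for the convolution $\bar f_1*B(x)$ uniform in both arguments, and case analysis over $\mu(X')$. These are all attainable, and your reading of the $\ln^4$ slack is correct, but the paper's approach shows that the two-sided local CLT is avoidable: Chernoff upper bounds plus the pre-existing one-variable power laws suffice. Also, your phrase ``first factor is at the local CLT peak $\approx(2\pi\sigma^2(X'))^{-1/2}$'' is slightly loose, since $F(x;t_1-1)$ already contains the convolution with $D_1$; what actually happens is that integrating $F(x;t_1-1)$ against the $X'$-density over the window reproduces $\pr(D_1+D_1'=t_1-1)\sim c' t_1^{-(\alpha-2)/(1-\beta)}$, exactly as in the marginal computation.
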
 
The intuition behind this result is that the largest contribution to each degree $D_i^* = 1+D_i+D_i'$,
 $i=1,2$, is made by the respective term $D_i'$. In particular, the tail asymptotic (\ref{2020-02-24++5}) is that of the tail of $D_i'$. Furthermore, the terms $D_1'$ and $D_2'$ are strongly correlated for large values $t_1$,$t_2$ and the distribution of 
 $(D_1',D_2')$  concentrates around the diagonal.
Indeed, this can be seen if we extrapolate  (\ref{2020-02-24++9}) to the range  $t_2-t_1=const$. Note, however, that this range is excluded by our technical condition 
$\delta_{t_2}=o(t_2-t_1)$.
\subsection{Assortativity}

The following result provides a formula of the limiting model assortativity which is well defined when the limiting degree distribution has a finite third moment.
In the special case with unit strengths, this formula yields the corresponding result for passive random intersection graphs given in \cite[Theorem 3.1]{Bloznelis_Jaworski_Kurauskas_2013}.

\begin{theorem}
\label{the:ModelAssortativity}
Let $m,n \to \infty$. Assume that $\frac{m}{n} \to \mu \in (0,\infty)$, and that $P^{(n)}_{rs} \to P_{rs} < \infty$ for $rs = 10,21,32,43$, for some probability measure $P$ on $\Z_+ \times [0,1]$ such that $P_{10}, P_{21}>0$. Then the model assortativity 
\eqref{eq:ModelAssortativity} converges according to
\[
 \Cor^*(\deg_{G_n}(I), \deg_{G_n}(J))
 \wto
 \frac{P_{21}( P_{43} + P_{33}) - P_{32}^2}
  {P_{21}(P_{43} + P_{32}) - P_{32}^2 + \mu P_{21}^2 ( P_{21} + P_{32})}.
\]
% Then the model assortativity is approximated by
%\[
% \Cor^*(D_I, D_J)
% \wto
% \frac{P_{21}( P_{43} + P_{33}) - P_{32}^2}
%  {P_{21}(P_{43} + P_{32}) - P_{32}^2 + \mu P_{21}^2 ( P_{21} + P_{32})}.
%\]
\end{theorem}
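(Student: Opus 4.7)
The plan is to use part~(\ref{ite:Wasserstein}) of Theorem~\ref{the:LimitingModeBidegree}, which under the stated assumptions gives $\int \phi\,df_{2,n}\to\int\phi\,d\bar f_2$ for every $\phi:\Z_+^2\to\R$ of at most quadratic growth. Since the model assortativity \eqref{eq:ModelAssortativity} is a ratio of integrals of $\phi(s,t)\in\{s,\,s^2,\,st\}$ against $f_{2,n}$, this single convergence statement, combined with continuity of the map $(a,b,c,d)\mapsto (c-a^2)/(d-a^2)$ at the limit point (where the denominator is strictly positive provided $P_{21}>0$), reduces the theorem to computing the Pearson correlation of the pair $(D_1^*,D_2^*)=(1+D_1+D_1',\,1+D_2+D_2')$ described in \eqref{eq:BidegreeRepresentation}.

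Next I would exploit the independence structure of this pair to simplify. Because $D_1,D_2,(D_1',D_2')$ are mutually independent with $D_1\stackrel{d}{=}D_2$ and $D_1'\stackrel{d}{=}D_2'$, the additive constant and the independent pieces cancel nicely, leaving
\[
\Cov(D_1^*,D_2^*)\weq \Cov(D_1',D_2')\qquad\text{and}\qquad \Var(D_1^*)\weq \Var(D_1)+\Var(D_1').
\]
So only five scalars are needed: $\E D_1$, $\Var(D_1)$, $\E D_1'$, $\Var(D_1')$, and $\E D_1'D_2'$.

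The third step is a mechanical evaluation of these moments in terms of the cross moments $P_{rs}$. For the compound Poisson $D_1\sim\CPoi(\lambda,g)$ with $\lambda=\mu P_{10}$ and $g$ given by \eqref{eq:LimitingModelDegreeIncrement}, standard identities yield $\E D_1=\lambda\E H=\mu P_{21}$ and $\Var(D_1)=\lambda\E H^2=\mu(P_{21}+P_{32})$, the latter after expanding $\E[\Bin(x-1,y)^2]=(x-1)y+(x-1)(x-2)y^2$ and integrating against $xP(dx,dy)/P_{10}$ to obtain $(P_{21}+P_{32})/P_{10}$. For the mixed binomial $D_i'\mid(X',Y')\sim\Bin(X'-2,Y')$ with mixing law $(x)_2 y\,P(dx,dy)/P_{21}$, analogous expansions give $\E D_1'=P_{32}/P_{21}$ and $\E D_1'^2=(P_{32}+P_{43})/P_{21}$. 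Finally, conditional independence of $D_1'$ and $D_2'$ gives $\E[D_1'D_2'\mid X',Y']=((X'-2)Y')^2$, and the identity $(x-2)^2(x)_2=(x)_4+(x)_3$ yields $\E D_1'D_2'=(P_{43}+P_{33})/P_{21}$.

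Assembling, one finds
\[
\Cov(D_1',D_2')\weq \frac{P_{21}(P_{43}+P_{33})-P_{32}^2}{P_{21}^2},\qquad
\Var(D_1^*)\weq \mu(P_{21}+P_{32})+\frac{P_{21}(P_{32}+P_{43})-P_{32}^2}{P_{21}^2},
\]
and multiplying numerator and denominator of the resulting ratio by $P_{21}^2$ produces exactly the expression claimed in the theorem. The only non-routine point is justifying the passage to the limit for the quadratic functionals that appear in \eqref{eq:ModelAssortativity}, which is precisely what the Wasserstein-2 statement in Theorem~\ref{the:LimitingModeBidegree}(\ref{ite:Wasserstein}) provides; everything else is algebraic bookkeeping of falling-factorial moments of $P$.
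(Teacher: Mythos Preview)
Your proposal is correct and follows essentially the same route as the paper: invoke Theorem~\ref{the:LimitingModeBidegree}\eqref{ite:Wasserstein} for the test functions $s$, $s^2$, $st$ to pass the covariance and variance to the limit, then use the independence structure of $(D_1^*,D_2^*)$ in \eqref{eq:BidegreeRepresentation} together with the falling-factorial moment computations for the mixed binomial pair and the compound Poisson law. The paper separates the moment calculation into a standalone proposition (Proposition~\ref{the:LimitingBidegreeCorrelation}) but the content is identical to what you have written.
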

The limiting assortativity is always nonnegative by the following result and the fact that $P_{33} \leq P_{32}$.

\begin{lemma}(Generalizes \cite[Remark 2]{Bloznelis_Jaworski_Kurauskas_2013})
\label{the:BivariateMomentInequality}
For any probability distribution $P$ on $\Z_+ \times [0,1]$,
\[
 P_{32}^2
 \wle P_{21} ( P_{43} + P_{33} ).
\]
\end{lemma}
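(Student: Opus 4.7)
The plan is to recognize this as a straightforward consequence of the Cauchy--Schwarz inequality once the quantities are rewritten in a convenient form. The starting observation is the falling-factorial identity
\[
 (x)_4 + (x)_3 \weq (x)_3\bigl((x-3)+1\bigr) \weq (x)_3 (x-2) \weq x(x-1)(x-2)^2,
\]
valid for all $x$. Applying this under the expectation (with the factor $Y^3$) gives the compact rewriting
\[
 P_{43} + P_{33} \weq \E\bigl[X(X-1)(X-2)^2 Y^3\bigr].
\]

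Next I would factor the integrand of $P_{32}=\E[X(X-1)(X-2)Y^2]$ as a product $UV$ with
\[
 U \weq \sqrt{X(X-1)\,Y}, \qquad V \weq (X-2)\sqrt{X(X-1)\,Y^3}.
\]
Because $X$ takes values in $\Z_+$, whenever $X<2$ both $U$ and $V$ vanish (the factor $X(X-1)$ is zero for $X\in\{0,1\}$, and $V$ carries the extra factor $X-2$); when $X\ge 2$, the quantity $X-2$ is a nonnegative integer, so $U,V\ge 0$ are well-defined real random variables. A direct multiplication verifies $UV = X(X-1)(X-2)Y^2$, and moreover $U^2 = X(X-1)Y$ and $V^2 = X(X-1)(X-2)^2 Y^3$, so $\E[U^2]=P_{21}$ and $\E[V^2]=P_{43}+P_{33}$ by the identity above.

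The Cauchy--Schwarz inequality then yields
\[
 P_{32}^2 \weq \bigl(\E[UV]\bigr)^2 \wle \E[U^2]\,\E[V^2] \weq P_{21}\bigl(P_{43}+P_{33}\bigr),
\]
which is the desired bound. There is no real obstacle here: the only nontrivial step is spotting the algebraic identity $(x)_4+(x)_3=x(x-1)(x-2)^2$ that makes the right-hand side a perfect second moment matching the square root factorization of the integrand of $P_{32}$; everything else is an immediate application of Cauchy--Schwarz to nonnegative random variables on $(\Z_+\times[0,1],P)$.
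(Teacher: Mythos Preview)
Your proof is correct. Both your argument and the paper's hinge on the same algebraic identity $(x)_4+(x)_3=(x)_2(x-2)^2$, which is what makes $P_{43}+P_{33}$ a ``perfect square'' moment. Where you invoke Cauchy--Schwarz directly after factoring the integrand of $P_{32}$, the paper instead introduces two independent $P$-distributed copies $(X,Q)$ and $(Y,R)$, writes $P_{21}(P_{43}+P_{33})-P_{32}^2=\E f(X,Q,Y,R)$, and symmetrizes to obtain
\[
 f(x,q,y,r)+f(y,r,x,q) \weq (x)_2(y)_2\,qr\bigl((x-2)q-(y-2)r\bigr)^2 \wge 0.
\]
This symmetrization is precisely the bare-hands derivation of Cauchy--Schwarz for the pair $(U,V)$ you defined, so the two proofs are equivalent; yours is simply the named-inequality version and is a bit more streamlined.
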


\subsection{Rank correlations}

Assortativity modeled using Pearson's correlation of the bidegree distribution is ill-behaved for graph models where the limiting degree distribution has an infinite third moment \cite{VanDerHofstad_Litvak_2014}. In such cases, rank correlation coefficients provide a robust alternative \cite{VanDerHofstad_Litvak_2014,VanDerHoorn_Litvak_2014,VanDerHoorn_Litvak_2015}. For a probability measure $f$ on $\R^2$ with nondegenerate marginals, Kendall's rank correlation  \cite{Kruskal_1958,Neslehova_2007} is defined by 

\begin{align*}
 \rhoken(f) &= \Cor\Big( \sgn(U^{(1)}-Z^{(1)}), \, \sgn(U^{(2)}-Z^{(2)}) \Big)
\end{align*}
where $\sgn(x) = 1(x>0) - 1(x<0)$, and $(U^{(1)},U^{(2)})$ and $(Z^{(1)},Z^{(2)})$ are mutually independent and $f$-distributed. Spearman's rank correlation is defined as
\[
 \rhospe(f)
 \weq \Cor \Big(r_1(U^{(1)}), \, r_2(U^{(2)}) \Big),
\]
where $(U^{(1)},U^{(2)})$  is $f$-distributed and $r_i(x) = \frac12( f^{(i)}(-\infty,x) + f^{(i)}(-\infty,x] )$ with $f^{(i)}$ denoting the $i$-th marginal distribution of $f$.
%\pi_i: (x_1,x_2)  \mapsto x_i$ denoting the $i$-th coordinate projection.
There are several alternative definitions for Spearman's rank correlation corresponding to different tie-breaking conventions \cite{Amerise_Tarsitano_2015}. The above definition agrees with the commonly used mid-rank convention \cite[Theorems 14 and 15]{Neslehova_2007}.

\begin{theorem}
\label{the:RankCorrelation}
Let $m,n\to+\infty$.
Assume that $\frac{m}{n} \to \mu \in (0,\infty)$, and $P^{(n)} \to P$ weakly with $P^{(n)}_{rs} \to P_{rs}$ for $rs=10,21$, where $0 <  P_{rs} < \infty$. Then it holds that
%For Spearman's and Kendall's rank correlation coefficients it holds that
\[
 \rhoken(f_{2,n}) \to \rhoken({\bar f}_2)
 \qquad \text{and} \qquad
 \rhospe(f_{2,n}) \to \rhospe({\bar f}_2),
\]
where the limiting bidegree distribution ${\bar f}_2$ is defined by \eqref{eq:Bidegree}.
\end{theorem}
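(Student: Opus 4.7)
The plan is to exploit that weak convergence on the discrete space $\Z_+^2$ is actually total variation convergence, and to show that both rank correlations are continuous functionals of the bidegree distribution in this topology, whenever the marginals of the limit are nondegenerate. By Theorem~\ref{the:LimitingModeBidegree}~(\ref{ite:Weak}), under the present hypotheses we have $f_{2,n} \to \bar f_2$ weakly; since the state space is countable, Scheff\'e's theorem upgrades this to $\|f_{2,n} - \bar f_2\|_{\mathrm{TV}} \to 0$, and the triangle inequality yields $\|f_{2,n} \otimes f_{2,n} - \bar f_2 \otimes \bar f_2\|_{\mathrm{TV}} \to 0$. Nondegeneracy of the marginals of $\bar f_2$ follows from the representation $D_i^* = 1 + D_i + D_i'$ in \eqref{eq:BidegreeRepresentation}: since $\lambda = \mu P_{10} > 0$ one has $\pr(D_i^* = 1) = e^{-\lambda}\pr(D_i' = 0) > 0$, and $P_{21} > 0$ forces $\pr(D_i^* \ge 2) > 0$, so the denominators in both $\rhoken(\bar f_2)$ and $\rhospe(\bar f_2)$ are strictly positive.

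For Kendall's tau, symmetry of $f_{2,n}$ in its two coordinates gives $\E \sgn(U^{(i)} - Z^{(i)}) = 0$ when $(U,Z)$ are independent and $f_{2,n}$-distributed, so
\[
 \rhoken(f_{2,n}) \weq \frac{\E\bigl[ \sgn(U^{(1)} - Z^{(1)})\, \sgn(U^{(2)} - Z^{(2)}) \bigr]}{\sqrt{(1 - \pr(U^{(1)} = Z^{(1)}))(1 - \pr(U^{(2)} = Z^{(2)}))}}.
\]
The numerator is the integral of a fixed bounded $\{-1,0,1\}$-valued function against $f_{2,n} \otimes f_{2,n}$, while $\pr(U^{(i)} = Z^{(i)}) = \sum_a (f_{2,n}^{(i)}(a))^2$ is similarly the integral of a bounded indicator against the product measure. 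Total variation convergence of the products delivers convergence of each ingredient, and the continuous mapping theorem gives $\rhoken(f_{2,n}) \to \rhoken(\bar f_2)$.

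For Spearman's rho, set $r_{i,n}(x) = F_{i,n}(x) - \tfrac12 f_{2,n}^{(i)}(x)$ and define $r_i$ analogously from $\bar f_2^{(i)}$. Marginal total variation convergence yields $r_{i,n}(x) \to r_i(x)$ for every $x \in \Z_+$, with the uniform bound $0 \le r_{i,n} \le 1$. Writing $\rhospe(f_{2,n})$ as the usual quotient of covariance over the product of standard deviations, each building block is a sum of the form $\sum_x r_{1,n}(x_1)^a r_{2,n}(x_2)^b f_{2,n}(x)$ with $a,b \in \{0,1,2\}$. Convergence of such sums follows by truncation: given $\epsilon>0$, pick a finite $K \subset \Z_+^2$ with $\bar f_2(K^c) < \epsilon$, so that $f_{2,n}(K^c) < 2\epsilon$ eventually; the contribution outside $K$ is at most $2\epsilon$ by the uniform bound on $r_{i,n}$, and on the finite set $K$ the pointwise convergence of $r_{i,n}$ combined with total variation convergence of $f_{2,n}$ forces convergence. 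Together with the nondegeneracy established above, this gives $\rhospe(f_{2,n}) \to \rhospe(\bar f_2)$. The only real subtlety is this Spearman step: because the scoring function $r_{i,n}$ itself depends on $n$, plain weak convergence of $f_{2,n}$ is not enough and one needs the uniform bound on $r_{i,n}$ together with total variation convergence of the marginals; Kendall's case is easier because $\sgn$ is a fixed bounded function.
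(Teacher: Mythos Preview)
Your proposal is correct. For Kendall's $\tau$ your argument is essentially the paper's: both write $\rhoken$ as a ratio of integrals of fixed bounded functions ($\sgn$ and an indicator) against product measures built from $f_{2,n}$, and both conclude from weak convergence of the products, which on the countable state space coincides with the total variation convergence you invoke.

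For Spearman's $\rho$ you take a genuinely different route. The paper appeals to a representation due to Ne\v{s}lehov\'a,
\[
 \rhospe(f_{2,n})
 \weq 3\,\frac{\displaystyle\int \sgn\!\bigl((x_1-y_2)(x_2-z_2)\bigr)\,d(f_{2,n}\otimes f_{2,n}\otimes f_{2,n})}
 {\displaystyle\int \bigl(1-1(x_1=y_1=z_1)\bigr)\,d(f^{(1)}_{2,n}\otimes f^{(1)}_{2,n}\otimes f^{(1)}_{2,n})},
\]
which packages $\rhospe$ as a ratio of integrals of \emph{fixed} bounded functions against triple products, so weak convergence of $f_{2,n}$ alone suffices and the $n$-dependence of the mid-rank scores never appears. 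Your direct approach---pointwise convergence of the uniformly bounded scores $r_{i,n}$ together with a truncation argument---is correct and more self-contained (no external representation theorem), at the price of handling that $n$-dependence by hand. One small fix: in your nondegeneracy step the identity $\pr(D_i^*=1)=e^{-\lambda}\pr(D_i'=0)$ is not quite right (the compound Poisson mass at $0$ is $\bar f_1(0)\ge e^{-\lambda}$, not equality in general), and $\pr(D_i'=0)$ can vanish, e.g.\ when $P$ is supported on $\{(x,1):x\ge 3\}$. A clean replacement: since $\lambda=\mu P_{10}>0$ and the increment distribution $g$ has mean $P_{21}/P_{10}>0$, the compound Poisson variable $D_i$ is nondegenerate, and an independent sum with a nondegenerate summand is nondegenerate; hence so is $D_i^*$.
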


\section{Discussion}
This article describes degree correlations in a sparse network model introduced in \cite{Bloznelis_Leskela_2019-12}, constructed by a natural superposition mechanism with overlapping layers.  The main contribution is a compact explicit description of the limiting model bidegree distribution (Theorem \ref{the:LimitingModeBidegree}), fully characterized in terms of the limiting joint distribution $P$ of layer sizes and layer strengths, and the limiting ratio $\mu$ of the number of layers and the number of nodes. Some remarks deserve further attention.

(i) 
In this work we have studied the model bidegree distribution, whereas several earlier works \cite{VanDerHofstad_Litvak_2014,VanDerHoorn_Litvak_2014,VanDerHoorn_Litvak_2015} have focused on the convergence %in probability 
of the empirical bidegree distribution computed from a fixed random graph sample. Based on analogous studies on ergodic properties of clustering coefficient \cite{Karjalainen_VanLeeuwaarden_Leskela_2018,Karjalainen_Leskela_2017}, we expect that both distributions converge to the same limit under mild regularity assumptions.

(ii) The freedom to tune the limiting joint distribution $P$ of layer sizes and layer strengths yields a rich class of network models. As a concrete example, we studied the case where the layer strength is a deterministic function of layer size so that $Y = q(X)$. If layer sizes follow an approximate power law $\pr(X=x) \propto x^{-\alpha}$, and $q(x) \propto x^{-\beta}$, then the limiting degree distribution follows a power law $\pr(D_1 = t) \propto t^{-\delta}$ with $\delta = 1 + \frac{\alpha-2}{1-\beta}$ \cite{Bloznelis_Leskela_2019-12}.
Because the marginals of the limiting bidegree distribution are size-biased versions of the degree distribution, it follows that the marginals of $f_2$ are power laws with density exponent $\delta -1 = \frac{\alpha-2}{1-\beta}$. The dependence structure of the power-law random variables $D_1^*$ and $D_2^*$ is implicitly captured by \eqref{eq:BidegreeRepresentation}. The same functional form of layer strengths has been also investigated in \cite{Yang_Leskovec_2014} for deterministic layer node sets.

(iii) Fitting the model to real data sets is a problem of future research. A fully nonparametric approach to estimating $P$ appears hard if not impossible, even though currently there are no (positive or negative) theoretical results regarding model identifiability. An alternative approach is to restrict to models where $P = P_\theta$ is parametrized by a small-dimensional parameter $\theta$, and develop estimators of $\theta$ using empirical small subgraph counts.  Recent work in this direction includes \cite{Grohn_Karjalainen_Leskela_2019-11-28,Karjalainen_VanLeeuwaarden_Leskela_2018,Karjalainen_Leskela_2017}
for models with constant layer strength. Model fitting with deterministic (unknown) layer node sets has been studied in \cite{Yang_Leskovec_2014}.

\section{Proofs}

\subsection{Proof of Theorem~\ref{the:LimitingModeBidegree}:\eqref{ite:Weak}}
We start with some auxiliary results. Theorem \ref{T1} (shown in \cite{Bloznelis_Leskela_2019-12}) establishes the asymptotic compound Poisson model degree distribution. Lemma \ref{the:truncation} shows that the degrees of nodes 1 and 2 are asymptotically independent in a model where the layers larger than some $M>0$ are removed. Lemma \ref{the:minfinity} shows that this asymptotic independence still holds when $M$ tends to infinity. 

\begin{theorem}[\cite{Bloznelis_Leskela_2019-12}, Theorem 3.1]\label{T1}
Let $\mu>0$. Let $n,m\to+\infty$. Assume that
$m/n \to \mu$. If

(i) $P^{(n)}$ converges weakly to some probability measure $P$ on $\mathbb{Z}_+ \times [0,1]$ and

(ii) $P^{(n)}_{10}$ converges to some number $P_{10} \in (0, \infty)$,

\noindent
then the model degree distribution of $G_n$ converges weakly to a compound Poisson distribution defined by  %distribution with parameters depending on $\mu$ and $P$.
\[
\sum_{k=1}^\Lambda H_k,
\]
where $\Lambda, H_1, H_2, \ldots$ are independent random variables with $\law(\Lambda) = \Poi(\mu P_{10})$ and $\law(H_k) = g$ with the probability mass function
\begin{equation*}
 g(s) 
 \weq \int \Bin(x-1,y)(s) \, \frac{x P(dx,dy)}{P_{10}},
 \qquad s \in \Z_+.
\end{equation*}

\end{theorem}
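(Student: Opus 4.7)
My plan is to establish the weak convergence $D_n := \deg_{G_n}(1) \wto \sum_{k=1}^\Lambda H_k$ via the continuity theorem for probability generating functions on $\Z_+$, reducing the problem to a model with bounded layer sizes through truncation. It suffices to show $\E z^{D_n} \to e^{-\mu \psi(z)}$ for every $z \in [0,1]$, where $\psi(z) := \E\bigl[X(1 - (1-Y+zY)^{X-1})\bigr]$ with $(X,Y) \sim P$; expanding yields $e^{-\mu\psi(z)} = e^{-\mu P_{10}(1 - G_H(z))}$ with $G_H$ the PGF of the increment law $g$, which is precisely the PGF of the target compound Poisson distribution.

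The starting decomposition is $D_n = \sum_{k=1}^m D_{n,k} - R_n$, where $D_{n,k} := \deg_{G_{n,k}}(1)$ is the contribution of layer $k$ and $R_n \ge 0$ counts vertices appearing as neighbours of node $1$ through more than one layer. Since node $1$ belongs to layer $k$ with probability $X_{n,k}/n$ and, conditionally on inclusion, $D_{n,k} \sim \Bin(X_{n,k}-1, Y_{n,k})$, the independence of layers yields
\[
\E z^{\sum_k D_{n,k}} \weq \Bigl(1 - \tfrac{1}{n}\E\bigl[X_{n,k}(1 - (1-Y_{n,k}+zY_{n,k})^{X_{n,k}-1})\bigr]\Bigr)^{\!m}.
\]
The integrand $(x,y) \mapsto x(1-(1-y+zy)^{x-1})$ is continuous on $\Z_+ \times [0,1]$ and dominated by $x$; the hypothesis $P^{(n)} \to P$ weakly combined with $P^{(n)}_{10} \to P_{10}$ delivers uniform integrability of $X$ under $P^{(n)}$, so the inner expectation converges to $\psi(z)$. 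Together with $m/n \to \mu$ this gives $\E z^{\sum_k D_{n,k}} \to e^{-\mu\psi(z)}$.

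The chief technical obstacle is showing that $R_n$ is asymptotically negligible, since under the stated hypotheses the second cross moment $P^{(n)}_{21}$ is not \emph{a priori} bounded and the naive estimate $\E R_n = O((P^{(n)}_{21})^2/n)$ need not vanish. I would handle this by truncation: for fixed $M \ge 1$, let $D_n^{(M)}$ denote the degree of node $1$ in the subgraph built only from layers with $X_{n,k} \le M$, and let $R_n^{(M)}$ be its overlap. The truncated cross moment satisfies $P^{(n,M)}_{21} \le M^2$, so a double-counting argument gives $\E R_n^{(M)} = O(m^2 M^4/n^3) = o(1)$, and the PGF argument above yields $D_n^{(M)} \wto D^{(M)}$, a compound Poisson law built from the restriction of $P$ to $\{x \le M\}$. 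The event $\{D_n \neq D_n^{(M)}\}$ is contained in the event that node $1$ lies in some layer with $X_{n,k} > M$, whose probability is at most $(m/n)\E[X_{n,k} 1(X_{n,k} > M)]$; by the uniform integrability of $X$ under $P^{(n)}$ this can be made arbitrarily small uniformly in $n$ by choosing $M$ large. Finally, $D^{(M)} \wto \sum_{k=1}^\Lambda H_k$ as $M \to \infty$ because the restricted layer-type law converges to $P$ and the corresponding rate and increment law converge to $\mu P_{10}$ and $g$. A standard triangle-inequality argument for bounded continuous test functions then assembles the three limits, yielding the desired weak convergence.
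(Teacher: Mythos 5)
Your argument is sound, but note that the paper does not prove this statement at all: it is imported verbatim as Theorem~3.1 of the cited reference \cite{Bloznelis_Leskela_2019-12}, so there is no in-paper proof to compare against. Judged on its own merits, your proposal is a correct and essentially complete route to the result. The PGF computation is right: conditioning on $(X,Y)\sim P^{(n)}$ gives $\E[z^{D_{n,1}}]=1-\tfrac1n\E[X(1-(1-Y+zY)^{X-1})]$, the integrand is continuous on the discrete-times-compact space $\Z_+\times[0,1]$ and dominated by $X$, and weak convergence together with $P^{(n)}_{10}\to P_{10}$ does yield the uniform integrability needed to pass to the limit in the inner expectation; the resulting limit $e^{-\mu\psi(z)}$ matches the compound Poisson PGF $e^{-\mu P_{10}(1-G_H(z))}$. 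You correctly identify the one genuine obstacle — the overlap term $R_n$ cannot be controlled by a raw second-moment bound under hypotheses (i)--(ii) alone — and your truncation at level $M$, with $\E R_n^{(M)}=O(m^2M^4/n^3)$, the uniform-in-$n$ bound $\pr(D_n\neq D_n^{(M)})\le (m/n)\,\E[X\,1(X>M)]$, and the $M\to\infty$ limit of the truncated compound Poisson laws, closes the gap by a standard three-term argument. It is worth observing that this is exactly the toolkit the present paper deploys for the bivariate extension (Lemmas~\ref{the:truncation} and~\ref{the:minfinity} truncate at $M$ and invoke the same uniform integrability of $X_{n,1}$), so your proof is fully consistent in spirit with the methods used here. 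Only cosmetic points remain: the $m$-th power formula should be written with a generic $P^{(n)}$-distributed pair rather than $(X_{n,k},Y_{n,k})$, and the final passage from pointwise PGF convergence to weak convergence should note that the limit is continuous at $z=1$ (equivalently $\psi(1)=0$), so that the limiting law is proper.
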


\begin{lemma}{(Independence with truncated communities.):}
\label{the:truncation}
Assume the conditions of Theorem \ref{T1} hold.  Fix $s,t \in \mathbb{N}$, and $M>0$ such that  $\pr ( X \leq M)  > 0$. Let $K = \{k: X_{n,k} \leq M \}$ and define $\hat G = ([n], E(\hat G))$ with
\[
E(\hat G) = \bigcup_{k \in K} E( G_{n,k}).
\]
Let $\hat d_i = \deg_{\hat G}(i)$. Then, as $n \rightarrow \infty$,
\[
\pr (\hat d_1 = s, \hat d_2 = t) \weq \pr (\hat d_1 = s) \pr(\hat d_2 = t) + o(1).
\]
\end{lemma}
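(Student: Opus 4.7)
The plan is to prove asymptotic independence of $\hat d_1$ and $\hat d_2$ by showing that, on an event of probability $1-o(1)$, the two degrees are determined by disjoint subfamilies of layers, and then by removing the small residual correlation between the node-membership indicators of nodes $1$ and $2$ via a coupling argument. The first step bounds the event $B^c$, where $B$ is the event that no layer $k \in K$ contains both nodes $1$ and $2$. With $\xi_k := 1(1 \in V(G_{n,k}))$ and $\eta_k := 1(2 \in V(G_{n,k}))$, the uniform distribution of $V(G_{n,k})$ given $X_{n,k}$ together with $X_{n,k} \leq M$ yields $\pr(\xi_k = \eta_k = 1,\, X_{n,k} \leq M) \leq M^2/(n(n-1))$, and a union bound over $k \leq m = O(n)$ gives $\pr(B^c) = O(M^2/n) = o(1)$.

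For the second step, set $\mathcal{F} := \sigma(\xi_k, \eta_k, X_{n,k}, Y_{n,k} : k \leq m)$. Conditionally on $\mathcal{F}$, the layers $(G_{n,k})_k$ remain mutually independent, and on $B$ the degree $\hat d_1$ depends only on the subfamily $\{G_{n,k} : k \in K,\ \xi_k = 1\}$, while $\hat d_2$ depends only on the disjoint subfamily $\{G_{n,k} : k \in K,\ \eta_k = 1\}$. Writing $F_1 := \pr(\hat d_1 = s \mid \mathcal{F})$ and $F_2 := \pr(\hat d_2 = t \mid \mathcal{F})$, this yields $\pr(\hat d_1 = s,\, \hat d_2 = t \mid \mathcal{F}) \cdot 1_B = F_1 F_2 \cdot 1_B$; since $F_i \in [0,1]$ and $\pr(B^c) = o(1)$,
\[
\pr(\hat d_1 = s,\, \hat d_2 = t) = \E[F_1 F_2] + o(1), \qquad \pr(\hat d_1 = s) = \E[F_1], \qquad \pr(\hat d_2 = t) = \E[F_2].
\]
The remaining task is therefore to show $\E[F_1 F_2] = \E[F_1]\,\E[F_2] + o(1)$.

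In the third step, $F_1$ is measurable with respect to $(\xi_k, X_{n,k}, Y_{n,k})_k$ and $F_2$ with respect to $(\eta_k, X_{n,k}, Y_{n,k})_k$. Conditioning on $\mathcal{F}_0 := \sigma((X_{n,k}, Y_{n,k})_k)$, the pairs $(\xi_k, \eta_k)$ are independent across $k$, and for each $k$ with $X_{n,k} \leq M$ the total variation distance between the joint law of $(\xi_k, \eta_k)$ and the product of its marginals is $O(M^2/n^2)$. Coupling each such pair to an independent pair $(\tilde\xi_k, \tilde\eta_k)$ with the same marginals yields a total disagreement probability $O(mM^2/n^2) = o(1)$, whence $\E[F_1 F_2 \mid \mathcal{F}_0] = \E[F_1 \mid \mathcal{F}_0] \cdot \E[F_2 \mid \mathcal{F}_0] + o(1)$. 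Averaging over $\mathcal{F}_0$ and controlling $\operatorname{Cov}(\E[F_1 \mid \mathcal{F}_0],\, \E[F_2 \mid \mathcal{F}_0])$ via Cauchy--Schwarz (the required $o(1)$-variance follows from an LLN-type concentration of these conditional means for i.i.d.\ layer types) completes the argument. The main obstacle is this final step: simultaneously handling the per-layer correlation between $\xi_k$ and $\eta_k$ and the long-range dependence they inherit through the shared layer parameters $(X_{n,k}, Y_{n,k})$.
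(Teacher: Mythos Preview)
Your first two steps mirror the paper's opening moves: the event $B^c$ that some truncated layer contains both nodes has probability $O(M^2/n)$, and on $B$ the two degrees are determined by disjoint subfamilies of layers. From there the paper and your proposal diverge. The paper conditions on the \emph{set} $Z_1=\{k:1\in V(\hat G_{n,k})\}$ rather than on all of $\mathcal{F}$: given $Z_1=A_1$, the layers indexed by $A_1^c$ are i.i.d.\ with the law of $(\hat X_{n,1},\hat Y_{n,1})$ conditioned on $1\notin V(\hat G_{n,1})$, and this perturbed type distribution still satisfies the hypotheses of Theorem~\ref{T1}. Hence $\pr(d_{G_{A_1^c}}(2)=t\mid Z_1=A_1)$ converges to the same limit as $\pr(\hat d_2=t)$, uniformly over $|A_1|\le\ln m$, and summing over $A_1$ recovers the factorization. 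This reduces the lemma directly to Theorem~\ref{T1} and needs no concentration argument for conditional probabilities.

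Your route, by contrast, hinges on the step you yourself flag as the obstacle: $\Var\bigl(\E[F_i\mid\mathcal{F}_0]\bigr)=o(1)$. Invoking ``an LLN-type concentration'' is not a proof, and until this is filled in the argument is incomplete. The bound is in fact attainable: set $G_1:=\pr(\hat d_1=s\mid\mathcal{F}_0)$ and note that replacing the type of a single layer $k$ can change $\hat d_1$ only on the event $1\in V(G_{n,k})$, which has probability at most $M/n$ under either type; a coupling therefore gives $|G_1-G_1^{(k)}|\le 2M/n$, and the Efron--Stein inequality yields $\Var(G_1)\le 2mM^2/n^2=O(M^2/n)$. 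But you must supply this or an equivalent argument. There is also a smaller slip: $F_1=\pr(\hat d_1=s\mid\mathcal{F})$ is not literally measurable with respect to $(\xi_k,X_{n,k},Y_{n,k})_k$ alone, since for layers with $\xi_k=1$ the value of $\eta_k$ determines whether node~2 lies in $V(G_{n,k})$ and hence affects the identity (not just the count) of the neighbors of node~1, which enters $\hat d_1$ through overlaps across layers. The resulting dependence on $(\eta_k)$ is $o(1)$ on $B$, but it should be handled explicitly before the coupling in your third step.
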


\begin{proof}
We construct $\hat G$ as follows. Let $\hat G_{n,k}$ be independent layers with sizes and strengths
\begin{gather*}
\hat X_{n,k} = X_{n,k} \I(X_{n,k} \leq M), \quad \hat Y_{n,k} = Y_{n,k} \I(X_{n,k} \leq M), \quad k = 1\ldots m.
\end{gather*}
We assume that $n$ is large enough so that $\pr(X_{n,k} \leq M) > 0$ and define the (random) edges as $E(\hat G) = \bigcup_{k=1}^m E(\hat G_{n,k})$. %Denote by $\hat D_k$ the nodes in the layer $\hat G_{n,k}$, and by $Z_i$ the layers that contain node $i$:
Denote by $Z_i$ the layers that contain node $i$:
\begin{gather*}
Z_i = \{ k: \; i \in V(\hat G_{n,k}) \}, \quad i=1,2.
\end{gather*}
By Markov's inequality
\begin{align}
\pr (|Z_1| > \ln m) \wle \frac{\E (|Z_1|)}{\ln m} \weq \pr (1 \in V(\hat G_{n,1}))  \frac{m}{\ln m} 
&\weq \E \Big( \frac{ \hat X_{n,1}}{n} \Big) \frac{m}{\ln m} \nonumber \\ 
&\wle  \frac{mM}{n \ln m}.
\label{eq:markov}
\end{align}
%Let $\hat d_i = \deg_{\hat G} (i)$ and
Define the events
\begin{gather*}
%{\cal T}_1=\{{\hat d}_1=s \}, \quad {\cal T}_2=\{{\hat d}_2=t \}, \quad {\cal T}_{1,2}={\cal T}_1\cap{\cal T}_2, \quad {\cal E}=\{Z_1 \cap Z_2 \neq \emptyset \}. 
{\hat {\cal T}}_{1,2}= \{{\hat d}_1=s\} \cap \{{\hat d}_2=t \} \quad \text{and} \quad {\cal E}=\{Z_1 \cap Z_2 \neq \emptyset \}. 
\end{gather*}
For $z_1 = 1, \ldots, m$ we obtain
\begin{align*}
\pr ({\cal E} \; \vert \; \, |Z_1| = z_1)  &\weq \pr (2 \in \cup_{k \in Z_1} V(\hat G_{n,k}) \; \vert \; \, |Z_1| = z_1)  \\
&\wle z_1 \pr (2 \in V(\hat G_{n,1}) \; \vert \; \, 1 \in V(\hat G_{n,1})) \\
&\weq z_1 \E \binom{n-2}{ \hat X_{n,1}  - 2} \binom{n-1}{\hat X_{n,1}  - 1}^{-1}\\
&\weq z_1 \E \frac{ \hat X_{n,1} -1}{n-1} < \frac{z_1 M}{n-1}, 
\end{align*}
thus,
\begin{align}
%\pr ({\cal E}, \; |Z_1| \leq \ln m ) &\weq \pr(|Z_1| \leq \ln m) \pr ({\cal E} \; \vert \; \, |Z_1| \leq \ln m ) \nonumber \\
%&\wle 1 \cdot \pr (Z_1 \cap Z_2 \neq \emptyset \; \vert \; \, |Z_1| = \lfloor \ln m \rfloor) \nonumber \\
%&\wle  \frac{M \ln m}{n-1}.
\pr ({\cal E}, \; |Z_1| \leq \ln m ) &\wle \pr ({\cal E} \; \vert \; \, |Z_1| \leq \ln m ) \nonumber \\
&\wle \pr (Z_1 \cap Z_2 \neq \emptyset \; \vert \; \, |Z_1| = \lfloor \ln m \rfloor) \nonumber \\
&\wle  \frac{M \ln m}{n-1}.
\label{eq:z1z2lnbound}
\end{align}
Since 
\begin{align*}
%\pr (\hat d_1 = s, \; \hat d_2 = t) 
% \weq &\pr (\hat d_1 = s, \; \hat d_2 = t, \; Z_1 \cap Z_2 = \emptyset, \; |Z_1| \leq \ln m) 
% \\
% &+ \pr (\hat d_1 = s, \; \hat d_2 = t, \; Z_1 \cap Z_2 \neq \emptyset, \; |Z_1| \leq \ln m) \\
% &+  \pr (\hat d_1 = s, \; \hat d_2 = t, \; |Z_1| > \ln m),
\pr ({\hat {\cal T}}_{1,2}) 
 \weq &\pr ({\hat {\cal T}}_{1,2}, \; {\cal E}^c, \; |Z_1| \leq \ln m)  + \pr ({\hat {\cal T}}_{1,2}, \; {\cal E}, \; |Z_1| \leq \ln m)  \\
 &+ \pr ({\hat {\cal T}}_{1,2}, \; |Z_1| > \ln m),
\end{align*}
it follows from \eqref{eq:markov} and \eqref{eq:z1z2lnbound} that
\begin{align}
%\pr (\hat d_1 = s, \; \hat d_2 = t) \weq \pr (\hat d_1 = s, \; \hat d_2 = t, \; Z_1 \cap Z_2 = \emptyset, \; |Z_1| \leq \ln m) + o(1).
\pr ({\hat {\cal T}}_{1,2}) \weq \pr ({\hat {\cal T}}_{1,2}, \; {\cal E}^c, \; |Z_1| \leq \ln m) + o(1).
\label{eq:t12approx}
\end{align}
The latter probability is split into mutually exclusive events as
\begin{align}
\pr ({\hat {\cal T}}_{1,2}, \; {\cal E}^c, \; |Z_1| \leq \ln m) = \sum_{|A_1| \leq \floor{\ln m} }\pr ({\hat {\cal T}}_{1,2}, \; {\cal E}^c, \; Z_1 = A_1).
\label{eq:psum}
\end{align}
We now express $\pr ({\hat {\cal T}}_{1,2}, \; {\cal E}^c, \; Z_1 = A_1)$ as a product of two independent events. 
For a set $A \subset [m]$, denote by $G_{A}$ the superposition of $\{\hat G_{n,k}: k \in A \}$. Observe that on the event ${\cal E}^c \cap \{Z_1 = A_1 \}$ the degree of node 2 is determined by $G_{A_1^c}$, and that the event $\{Z_1 = A_1 \}$ equals $\{Z_1 \subset A_1\} \cap \{Z_1 \supset A_1 \}$. With these observations in mind, we rewrite the probability $\pr ({\hat {\cal T}}_{1,2}, \; {\cal E}^c, \; Z_1 = A_1)$ as
\begin{align*}
%\pr ({\cal T}_{1,2}, \; {\cal E}^c, \; Z_1 = A_1) = \pr ({\cal T}_{1,2}, \; Z_2 \cap A_1 = \emptyset, \; Z_1 \subset A_1, \; Z_1 \supset A_1)
%\pr ({\cal T}_{1,2}, \; {\cal E}^c, \; Z_1 = A_1) = 
\pr ( d_{G_{A_1}}(1) = s, \; d_{G_{A_1^c}}(2) = t, \; Z_2 \cap A_1 = \emptyset, \; Z_1 \subset A_1, \; Z_1 \supset A_1),
\end{align*}
where $d_G(i) := \deg_G(i)$. The event $\{ d_{G_{A_1}}(1) = s \} \cap \{ Z_2 \cap A_1 = \emptyset\} \cap \{Z_1 \supset A_1 \}$ only depends on the layers $A_1$, and the event $\{ d_{G_{A_1^c}}(2) = t \} \cap \{ Z_1 \subset A_1 \}$ only depends on the layers $A_1^c$, so that the probability above equals
\begin{align}
%\pr &( d_{G_{A_1}}(1) = s, \; Z_2 \cap A_1 = \emptyset, \; Z_1 \supset A_1) \pr( d_{G_{A_1^c}}(2) = t, \; Z_1 \subset A_1) \label{eq:psplit} \\
%&= \pr ( d_{G_{A_1}}(1) = s, \; Z_2 \cap A_1 = \emptyset, \; Z_1 \supset A_1) \pr( d_{G_{A_1^c}}(2) = t | Z_1 \subset A_1) \pr(Z_1 \subset A_1). \nonumber
\label{eq:psplit}
\pr ( d_{G_{A_1}}(1) = s,  Z_2 \cap A_1 \!= \! \emptyset,  Z_1 \supset A_1) \pr( d_{G_{A_1^c}}(2) = t | Z_1 \subset A_1) \pr(Z_1 \subset A_1).
\end{align}
%We first concentrate on the latter probability. First,
%\[
%\pr( d_{G_{A_1^c}}(2) = t, \; Z_1 \subset A_1) = \pr( d_{G_{A_1^c}}(2) = t \; | \; Z_1 \subset A_1) \pr(Z_1 \subset A_1),
%\]
%where $\pr( d_{G_{A_1^c}}(2) = t \; | \; Z_1 \subset A_1)$ only depends on the size $A_1$. 
%Here $\pr( d_{G_{A_1^c}}(2) = t | Z_1 \subset A_1)$ only depends on the size $|A_1|$, and we now employ a coupling argument to approximate this probability. 

%Here $\pr( d_{G_{A_1^c}}(2) = t | Z_1 \subset A_1)$ only depends on the size $|A_1|$, and 
We now employ a coupling argument to approximate the probability \linebreak $\pr( d_{G_{A_1^c}}(2) = t | Z_1 \subset A_1)$. Introduce a random graph $G^*$, which is a superposition of $m$ i.i.d layers $G^*_{n,k}$ with sizes and strengths  $(X^*_{n,k}, Y^*_{n,k})$ following the distribution 
\begin{gather*}
\pr(X_{n,k}^* = x, Y_{n,k}^* \leq y) \weq \pr(\hat X_{n,k} = x, \hat Y_{n,k} \leq y \; | \; 1 \not\in V(\hat G_{n,k})), \quad k=1,\ldots, m.
\end{gather*}
The nodes of the layers, $V(G^*_{n,k})$, are then chosen uniformly at random from the node set $\{2, \ldots, n \}$, and the edges are generated independently with probabilities  $Y^*_{n,k}$.
Define $G^*_{A_1^c} = \{[n] \setminus \{1\}, \cup_{k\in A_1^c} E(G^*_{n,k})\}$. We approximate $d_{G^*_{A_1^c}}(2)$ by $d_{G^*}(2)$ as follows. The union bound gives
\begin{align*}
%\pr (d_{G^*_{|A_1^c|}}(2) \neq d_{G^*}(2) ) &\leq \pr \Big( 2 \in \bigcup_{k=m-\floor{\ln m}}^{|A_1^c|} V(G^*_{n,k}) \Big) \\
%\pr (d_{G^*_{|A_1^c|}}(2) \neq d_{G^*}(2) ) &\leq \pr \Big( 2 \in \bigcup_{k \in A_1} V(G^*_{n,k}) \Big) \\
\pr (d_{G^*_{A_1^c}}(2) \neq d_{G^*}(2) ) &\leq \pr \Big( 2 \in \bigcup_{k \in A_1} V(G^*_{n,k}) \Big) \\
&\leq (\ln m)\pr( 2 \in V(G^*_{n,1})) \\
&\leq \frac{M \ln m }{n-1},
\end{align*}
and so, 
\begin{align*}
%\big| \pr (d_{G^*_{|A_1^c|}}(2) = r) - \pr( d_{G^*_{m-\floor{\ln m}}}(2) = r) \big| \leq \frac{M \ln m }{n-1}, \quad \forall r=0,1,\ldots,
\big| \pr (d_{G^*_{A_1^c}}(2) = r) - \pr( d_{G^*}(2) = r) \big| \leq \frac{M \ln m }{n-1}, \quad \forall r=0,1,\ldots,
\end{align*} 
which is $o(1)$ as $n \to \infty$. 

By construction, for  $k \in A_1^c$ the distribution of $G^*_{n,k}$ equals the conditional distribution of $\hat G_{n,k}$, given the event $\{Z_1 \subset A_1\}$. Thus, $\pr( d_{G_{A_1^c}}(2) = t | Z_1 \subset A_1) \linebreak = \pr (d_{G^*_{A_1^c}}(2) = t)$, and the previous inequality gives
\begin{align*}
%\pr( d_{G_{A_1^c}}(2) = t | Z_1 \subset A_1) = \pr( d_{G^*_{m-\floor{\ln m}}}(2) = t) + o(1),
%\pr( d_{G_{A_1^c}}(2) = t | Z_1 \subset A_1) %&= \pr (d_{G^*_{|A_1^c|}}(2) = t) \\
%&= \pr( d_{G^*_{m-\floor{\ln m}}}(2) = t) + o(1).
\pr( d_{G_{A_1^c}}(2) = t | Z_1 \subset A_1) %&= \pr (d_{G^*_{|A_1^c|}}(2) = t) \\
&= \pr( d_{G^*}(2) = t) + o(1).
\end{align*}
%Combining this, \eqref{eq:psum}, and \eqref{eq:psplit} we obtain
We insert this into \eqref{eq:psplit}, which together with \eqref{eq:psum} gives
\begin{align*}
%\pr &({\hat {\cal T}}_{1,2}, \; {\cal E}^c, \; |Z_1| \leq \ln m)  \\
%&\!= \Big(\pr( d_{G^*_{m-\floor{\ln m}}}(2)\! = \! t) \sum_{|A_1| \leq \floor{\ln m} } \pr ( d_{G_{A_1}}(1) \!=\! s,  Z_2 \cap A_1 \!=\! \emptyset,  Z_1 \!=\! A_1)\Big)\! +\! o(1),
\pr &({\hat {\cal T}}_{1,2}, \; {\cal E}^c, \; |Z_1| \leq \ln m)  \\
&\!= \Big(\pr( d_{G^*}(2)\! = \! t) \sum_{|A_1| \leq \floor{\ln m} } \pr ( d_{G_{A_1}}(1) \!=\! s,  Z_2 \cap A_1 \!=\! \emptyset,  Z_1 \!=\! A_1)\Big) + o(1),
\end{align*} 
where we have used the fact that the events $\{ d_{G_{A_1}}(1) \!=\! s,  Z_2 \cap A_1 \!=\! \emptyset,  Z_1 \supset A_1\} $ and $\{Z_1 \subset A_1\}$ are independent.
By using the mutual exclusivity of the events $\{Z_1 = A_1\}$ and the fact that $d_{G_{A_1}}(1) = \hat d_1$ when $Z_1=A_1$, the above expression simplifies to
\begin{align}
%\pr( d_{G^*_{m-\floor{\ln m}}}(2) = t) \pr ( \hat d_1 = s, \;  Z_2 \cap Z_1 = \emptyset, \; |Z_1| \leq \ln m) + o(1).
\pr( d_{G^*}(2) = t) \pr ( \hat d_1 = s, \;  Z_2 \cap Z_1 = \emptyset, \; |Z_1| \leq \ln m) + o(1).
\label{eq:mutual}
\end{align}
%where we have used the fact that $d_{G_{A_1}}(1) = \hat d_1$ when $Z_1=A_1$. 
Returning back to \eqref{eq:t12approx}, this yields
\begin{align*}
%\pr({\hat {\cal T}}_{1,2}) &\weq \pr({\hat {\cal T}}_{1,2}, \cE^c, |Z_1| \leq \ln m) + o(1) \\
%&\weq \pr( d_{G^*_{m-\floor{\ln m}}}(2) = t) \pr ( \hat d_1 = s, \;  Z_2 \cap Z_1 = \emptyset, \; |Z_1| \leq \ln m) + o(1) \\
%&\weq \pr( d_{G^*_{m-\floor{\ln m}}}(2) = t) \pr ( \hat d_1 = s) + o(1).
\pr({\hat {\cal T}}_{1,2}) &\weq \pr({\hat {\cal T}}_{1,2}, \cE^c, |Z_1| \leq \ln m) + o(1) \\
&\weq \pr( d_{G^*}(2) = t) \pr ( \hat d_1 = s, \;  Z_2 \cap Z_1 = \emptyset, \; |Z_1| \leq \ln m) + o(1) \\
&\weq \pr( d_{G^*}(2) = t) \pr ( \hat d_1 = s) + o(1).
\end{align*}
In the last step we used \eqref{eq:markov} and \eqref{eq:z1z2lnbound}.

We complete the proof by showing that $\pr(d_{G^*}(2) = t)- \pr ( {\hat d}_2 = t)=o(1)$. To this aim we employ Theorem \ref{T1}.  Set $({\hat X}, {\hat Y}) = (X{\mathbb I}(X\le M), Y{\mathbb I}(X\le M))$, where $(X,Y) \sim P$. Note that  $(X^*_1, Y^*_1)$ converges in distribution and in ${\cal L}_1$  to $(\hat X, \hat Y)$: indeed,
\begin{align*}
\pr(X^*_{n,1} = x, Y^*_{n,1} \leq y) &= \pr(\hat X_{n,1} = x, \hat Y_{n,1} \leq y \; | \; 1 \not\in V(\hat G_{n,1})) \\
&= \frac{\pr(1 \not\in V(\hat G_{n,1}) | \hat X_{n,1} \!=\! x, \hat Y_{n,1} \!\leq\! y ) \pr ( \hat X_{n,1} \!=\! x, \hat Y_{n,1} \!\leq\! y)}{\pr(1 \not\in V(\hat G_{n,1}) )}\\
&= \frac{\binom{n-1}{x}\binom{n}{x}^{-1} \pr ( \hat X_{n,1} = x, \hat Y_{n,1} \leq y)}{\E \binom{n-1}{\hat X_{n,1}} \binom{n}{\hat X_{n,1}}^{-1}} \\
&=  \frac{1- n^{-1} x}{1- n^{-1} \E  \hat X_{n,1} } \pr ( \hat X_{n,1} = x, \hat Y_{n,1} \leq y) \\
& \to \pr(\hat X = x, \hat Y \leq y).
\end{align*} 
Thus, by Theorem \ref{T1},  $d_{G^*}(2)$ converges to the same distribution as $\hat d_2$, and so
\begin{align}
\pr ({\hat {\cal T}}_{1,2}) = \pr( \hat d_1 = s)\pr ( \hat d_2 = t) + o(1).
\label{eq:independencehat}
\end{align} 
\qed
\end{proof}

\begin{lemma}{(M to infinity.)}
\label{the:minfinity}
Assume $(i)$ and $(ii)$ of Theorem \ref{T1}, and that $m/n \to \mu$. Define $\hat d_1, \hat d_2$ as in Lemma \ref{the:truncation}. Denote $D_i = D_{n,i} = \deg_{G_n}(i)$. As $M \rightarrow \infty$,
\begin{equation}
\label{2021-05-24}
{\sup}_n | \pr (D_1 = s, D_2 = t) - \pr (\hat d_1 = s, \hat d_2 = t)| \rightarrow 0,
\end{equation}
and especially, as $n \rightarrow \infty$,
\begin{equation}
\label{eq:minfinity}
\pr (D_1 = s, D_2 = t) - \pr(D_1=s)\pr(D_2=t)
%f_n(s) f_n(t) 
\rightarrow 0.
\end{equation}
%\begin{equation}
%\label{eq:minfinity}
%\pr (D_1 = s, D_2 = t) - f_n(s) f_n(t) \rightarrow 0,
%\end{equation}
%where $D_i = \deg_{G_n}(i)$ and $f_n(s) = \pr(D_i=s)$.
\end{lemma}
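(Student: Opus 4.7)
I would prove the two assertions in sequence: first establish the uniform truncation bound \eqref{2021-05-24}, and then deduce the asymptotic independence \eqref{eq:minfinity} by combining \eqref{2021-05-24} with Lemma~\ref{the:truncation} through an $\varepsilon/3$ telescoping estimate. The whole argument hinges on uniform integrability of the layer sizes, which comes for free from the hypotheses of Theorem~\ref{T1}.

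\textbf{Step 1 (uniform truncation).} The degree $D_i$ and its truncated version $\hat d_i$ can differ only if node $i$ belongs to at least one layer of size exceeding $M$. Since, conditionally on $X_{n,k}$, node $i$ lies in $V(G_{n,k})$ with probability $X_{n,k}/n$, a union bound across the $m$ independent layers gives
\[
 \pr(D_i \neq \hat d_i)
 \wle \sum_{k=1}^m \pr\bigl(i \in V(G_{n,k}),\, X_{n,k} > M\bigr)
 \weq \frac{m}{n}\, \E\bigl[X_{n,1}\, \I(X_{n,1} > M)\bigr].
\]
The hypotheses $P^{(n)} \to P$ weakly and $P^{(n)}_{10} \to P_{10} \in (0,\infty)$ yield weak convergence of $X_{n,1}$ to $X$ together with convergence of first moments, which is equivalent to uniform integrability of $\{X_{n,1}\}_n$. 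Hence $\sup_n \E[X_{n,1}\, \I(X_{n,1} > M)] \to 0$ as $M \to \infty$; since $m/n \to \mu$ is bounded, $\sup_n \pr(D_i \neq \hat d_i) \to 0$ as $M \to \infty$ for $i=1,2$. The elementary bound
\[
 \bigl|\pr(D_1 = s,\, D_2 = t) - \pr(\hat d_1 = s,\, \hat d_2 = t)\bigr|
 \wle \pr(D_1 \neq \hat d_1) + \pr(D_2 \neq \hat d_2)
\]
then proves \eqref{2021-05-24}.

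\textbf{Step 2 (asymptotic independence).} Fix $\varepsilon > 0$. By \eqref{2021-05-24}, I can choose $M$ large enough that, uniformly in $n$,
\[
 \bigl|\pr(D_1 = s,\, D_2 = t) - \pr(\hat d_1 = s,\, \hat d_2 = t)\bigr| \wle \varepsilon/3,
\]
and, using $|ab - a'b'| \le |a-a'| + |b-b'|$ for $a,a',b,b' \in [0,1]$ together with Step~1 applied marginally,
\[
 \bigl|\pr(\hat d_1 = s)\,\pr(\hat d_2 = t) - \pr(D_1 = s)\,\pr(D_2 = t)\bigr| \wle \varepsilon/3.
\]
With $M$ fixed, Lemma~\ref{the:truncation} gives $|\pr(\hat d_1 = s,\, \hat d_2 = t) - \pr(\hat d_1 = s)\,\pr(\hat d_2 = t)| \le \varepsilon/3$ for all sufficiently large $n$. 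Adding the three bounds via the triangle inequality yields \eqref{eq:minfinity}.

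\textbf{Main obstacle.} The one point requiring care is the uniform integrability of $\{X_{n,1}\}_n$; it is a classical fact that weak convergence plus convergence of first moments to a finite limit implies uniform integrability, and this is precisely what the assumptions of Theorem~\ref{T1} provide. Beyond that, the argument is purely an $\varepsilon/3$ telescoping: Lemma~\ref{the:truncation} supplies the asymptotic independence at fixed $M$, and Step~1 lets me pass to the limit $M \to \infty$ uniformly in $n$. No finer combinatorial estimate is needed here.
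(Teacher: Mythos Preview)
Your proposal is correct and follows essentially the same approach as the paper: both prove \eqref{2021-05-24} via the union bound $\pr(D_i\neq\hat d_i)\le \tfrac{m}{n}\E[X_{n,1}\I(X_{n,1}>M)]$ combined with uniform integrability of $\{X_{n,1}\}_n$, and then deduce \eqref{eq:minfinity} by a triangle-inequality telescoping through Lemma~\ref{the:truncation} and the marginal version of the truncation bound. Your $\varepsilon/3$ bookkeeping is in fact a bit cleaner than the paper's own presentation.
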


\begin{proof}
From the assumptions it follows that $X_{n,1}$, $n=1,\ldots$ are uniformly integrable, hence
\begin{equation}
\label{eq:unifinteg}
\lim_{M\to+\infty} \sup_n  \E \left(X_{n,1}  \I (X_{n,1}>M) \right)=0.
\end{equation}
%and
%\begin{equation}
%\label{eq:pointmassconvergence}
%\lim_{M\to+\infty}\sup_{k\ge 0}  | \pr (D_1 = k) - \pr (\hat d_1 = k) | = 0.
%%\lim_{M\to+\infty}\sup_{k\ge 0} \bigl| \pr ({\hat \xi}=k)-\pr (\xi=k) \bigr| = 0.
%\end{equation}
Denote ${\cal T}_{1,2} = \{D_1 = s, D_2 = t\}$. We approximate the probability of this event by the corresponding probability in the truncated model:
\begin{align}
\label{eq:bidegapprox}
| \pr ({\cal T}_{1,2}) - \pr (\hat d_1 = s, \hat d_2 = t) | &\wle \pr (D_1 \neq \hat d_1) + \pr (D_2 \neq \hat d_2).
\end{align}
%and note that $D_1 \neq \hat d_1$ when node 1 is in a layer $G_{n,i}$ with $X_i > M$. 
The event $\{ D_1 \neq \hat d_1\}$ occurs when removing the layers larger than $M$ also removes an edge between node 1 and another node. Since this can only happen if there exists a layer $i$ such that $1 \in V(G_{n,i})$ and $X_{n,i} > M$,
the union bound gives
\begin{align*}
\pr (D_1 \neq \hat d_1) &\wle \pr \Big( \bigcup_{i=1}^m \{1 \in V(G_{n,i}), X_{n,i}  > M \} \Big) \\
&\wle \sum_{i=1}^m \pr (1 \in V(G_{n,i}), X_{n,i} > M),
\end{align*}
and by the law of total probability
\begin{align*}
\pr (1 \in V(G_{n,i}), X_{n,i} > M ) &\weq \E [\pr (1 \in V(G_{n,i}), X_{n,i} > M \,| \, X_{n,i} ] \\
&\weq \E [\pr (1 \in V(G_{n,i}) \,|\, X_{n,i}) \, \I (X_{n,i}>M) ] \\
&\weq \E \Big( \frac{X_{n,i}}{n} \I (X_{n,i}>M) \Big).
\end{align*}
Putting this back to \eqref{eq:bidegapprox} gives
\[
| \pr ({\cal T}_{1,2}) - \pr (\hat d_1 = s, \hat d_2 = t) | \wle 2 \frac{m}{n} \E \Big( X_{n,1}  \I (  X_{n,1} > M)\Big).
\]
We similarly obtain 
\begin{equation}
\label{eq:pointmassconvergence}
%\lim_{M\to+\infty}\sup_{k\ge 0}  | \pr (D_1 = k) - \pr (\hat d_1 = k) | = 0.
%%\lim_{M\to+\infty}\sup_{k\ge 0} \bigl| \pr ({\hat \xi}=k)-\pr (\xi=k) \bigr| = 0.
\sup_{k\geq 0} | \pr (D_1 = k) - \pr (\hat d_1 = k) | \wle \pr(D_1 \neq \hat d_1) \wle \frac{m}{n} \E \Big( X_{n,1} \I(X_{n,1} > M) \Big).
\end{equation} 
It follows from \eqref{eq:unifinteg} and the boundedness of $m/n$ that for any $\varepsilon>0$ there exists $M_\varepsilon$ such that $\forall M > M_\varepsilon$
\begin{align}
| \pr ({\cal T}_{1,2}) - \pr (\hat d_1 = s, \hat d_2 = t) | &\wle \varepsilon, \quad \forall s,t \geq 0, 
\nonumber \\
| \pr (D_1 = k) - \pr (\hat d_1 = k) | &\wle  \varepsilon, \quad \forall k \geq 0. \label{eq:d1tohatd1}
\end{align} 
%Moreover, by \eqref{eq:pointmassconvergence} we may choose $M_\varepsilon^{(2)} > M_\varepsilon$ such that 
%\[
%| \pr (D_1 = k) - \pr (\hat d_1 = k) | < \varepsilon, \quad \forall k \geq 0.
%\]
Finally, we write
\begin{align*}
| \pr ({\cal T}_{1,2}) - \pr (\hat d_1 = s) \pr(\hat d_2 = t) | \wle &| \pr ({\cal T}_{1,2}) - \pr (\hat d_1 = s, \hat d_2 = t) | \\ &+ | \pr (\hat d_1 = s, \hat d_2 = t) -\pr (\hat d_1 = s) \pr(\hat d_2 = t) |.
\end{align*}
The first term on the right-hand side is at most $2\varepsilon$ by the above calculation, and the second term is $o(1)$ by Lemma \ref{the:truncation}.  Hence (\ref{2021-05-24}) follows.
  Claim \eqref{eq:minfinity} follows from (\ref{2021-05-24}), \eqref{eq:d1tohatd1}.  \qed
\end{proof}

We now introduce some notation. Recall that the bidegree distribution of $G_n$ is given by
\[
 f_{2,n}(s,t)
 \weq \pr\Big( \deg_{G_n}(1) = s, \deg_{G_n}(2) = t \ \big| \ \{1,2\}  \in E(G_n) \Big).
\]
For $A \subset [m]$, denote by $G_{n,A}$ the graph with $V(G_{n,A}) = [n]$ and $E(G_{n,A}) = \cup_{k \in A} E(G_{n,k})$. Denote $D_i =\deg_{G_n}(i)$. We note that for any $k$,
\[
 D_i
 \weq D_{i,k} + \tilde D_{i,k} - \hat D_{i,k},
\]
where $D_{i,k}$ and $\tilde D_{i,k}$ are the degrees produced by layer $k$ and by the layers other than $k$, i.e., 
\[
 D_{i,k} = \deg_{G_{n,k}}(i),
 \quad \tilde D_{i,k} = \deg_{G_{n, [m]\setminus \{k\}}}(i).
 \]
Furthermore, $\hat D_{i,k}$ stands for the number of neighbours produced by the layer $k$ and at least one other layer,
 \[
 \quad \hat D_{i,k} = \deg_{G_{n,k} \cap G_{n, [m]\setminus \{k\}}}(i).
\]
Denote the event $\cE_k = \{ \{1,2 \} \in E(G_{n,k})\}$ and 
\begin{align*}
 f_n(s) &\weq \pr(D_i = s), \\
 \tilde f_{2,n}(s,t)
 &\weq \pr( \tilde D_{1,k} =s, \tilde D_{2,k} = t), \\
 f'_{2,n}(s,t)
 &\weq \pr( D_{1,k} =s, D_{2,k} = t \cond \cE_k ).
\end{align*}
Throughout the proof of Theorem \ref{the:LimitingModeBidegree}:\eqref{ite:Weak} we write $X_i = X_{n,i}$ and $Y_i = Y_{n,i}$.

We summarize the proof strategy as follows. 
%We derive an approximation for $f_{2,n}(s,t) \, \pr( 12 \in E(G_n))$ ({eq:f2npapprox} below) using the fact that the probability of the event  
We show that the probability of the event
\[
%\bigcup_{k}\left(\cE_k \cap (\{\hat D_{1,k} > 0 \} \cup \{ \hat D_{2,k} > 0\}) \right)
\cE_k \cap (\{\hat D_{1,k} > 0 \} \cup \{ \hat D_{2,k} > 0\})
\]
is negligible (eq. \eqref{eq:on2}). Furthermore, for every $k=1,\dots, m$, on the event $\cE_k$, we have $D_1 \approx D_{1,k} + \tilde D_{1,k}$ and $D_2 \approx D_{2,k} + \tilde D_{2,k}$ for large $n$. %This yields the approximation \eqref{eq:f2npapprox} for the quantity $f_{2,n}(s,t) \, \pr( 12 \in E(G_n))$. Theorem \ref{the:kovo20-3new} and Lemma \ref{the:minfinity} allow us to further 
This together with Theorem \ref{the:kovo20-3new} and Lemma \ref{the:minfinity} allows us to approximate $ f_{2,n} \approx (f_n \otimes f_n) \ast f'_{2,n} $ (eq. \eqref{eq:f2npapprox} and \eqref{eq:BidegreeKeyLimit}). In the last part of the proof we verify that $f'_{2,n}(s,t) \approx f_2'(s-1,t-1)$, which together with the previous approximation and $f_n \otimes f_n \approx \bar f_1 \otimes \bar f_1$ gives the result, $f_{2,n} \approx \delta_{(1,1)} \ast (\bar f_1 \otimes \bar f_1) \ast f_2'$. 

\begin{proof}[ Proof of Theorem~\ref{the:LimitingModeBidegree}:(i)]
First note that $\pr( \{1,2\} \in E(G_n)) = \pr(\cup_{k=1}^m \cE_k)$ and
\[
\sum_{k=1}^m \pr (\cE_k) - \sum_{k<l} \pr (\cE_k \cap \cE_l) \wle \pr (\cup_{k=1}^m \cE_k) \wle \sum_{k=1}^m \pr (\cE_k),
\]
and since the layers are i.i.d.,
\[
\sum_{k<l} \pr (\cE_k \cap \cE_l) = \frac{1}{2}(m)_2 (P_{21}^{(n)} (n)_2^{-1})^2,
\]
and so
\begin{equation}
\label{eq:cEk}
\pr( \{1,2 \} \in E(G_n)) = \pr (\cup_{k=1}^m \cE_k) = \sum_{k=1}^m \pr (\cE_k) + O(n^{-2}) = m\pr(\cE_1) + O(n^{-2}),
\end{equation}
where we used the fact that $m/n$ is bounded.
We similarly approximate
\begin{align}
&f_{2,n}(s,t) \, \pr( \{1,2\} \in E(G_n) )  \nonumber\\ 
 &\weq  \pr(D_1=s, D_2=t, \cup_k \cE_k ) \nonumber\\
% &\ \approx \; \sum_k \pr( D_1=s, D_2=t, \cE_k ) \nonumber\\
 & \weq  m \pr( D_1=s, D_2=t, \cE_1 ) + O(n^{-2}) \nonumber\\
%&\weq \sum_k \pr( D_{1,k} + \tilde D_{1,k} - \hat D_{1,k}=s, \, D_{2,k} + \tilde D_{2,k} - \hat D_{2,k}=t, \, \cE_k) + O(n^{-2}). \label{eq:hatD}
&\weq m \pr( D_{1,1} + \tilde D_{1,1} - \hat D_{1,1}=s, \, D_{2,1} + \tilde D_{2,1} - \hat D_{2,1}=t, \, \cE_1) + O(n^{-2}). \label{eq:hatD}
\end{align}

Next we show that on the event $\cE_1$ the probability that $\hat D_{1,1}>0$ or $\hat D_{2,1}>0$ is negligible. Recall that $X_1$ and $Y_1$ denote the size and strength of layer 1. The union bound gives
%$\pr( D_{1,k} + \tilde D_{1,k} - \hat D_{1,k}=s, \, D_{2,k} + \tilde D_{2,k} - \hat D_{2,k}=t, \, \cE_k)$ is approximately $\pr( D_{1,k} + \tilde D_{1,k},  \, D_{2,k} + \tilde D_{2,k},  \, \cE_k)$
\begin{align*}
\pr(\cE_1 & \cap (\{\hat D_{1,1} > 0 \} \cup \{ \hat D_{2,1} > 0\}) \; | \; V(G_{n,1}), X_1, Y_1) \\
& \wle 2 \pr(\cE_1 \cap \{\hat D_{1,1} > 0 \} \; | \; V(G_{n,1}), X_1, Y_1)
\\
& \wle 2 \pr(\cE_1 \cap \left( \cup_{k=2}^m \cup_{j \in V(G_{n,1})} \{ \{1,j \} \in E(G_{n,k}) \} \right)  | \; V(G_{n,1}), X_1, Y_1) \\
& \wle 2 \sum_{k=2}^{m} \sum_{j \in V(G_{n,1})} \pr(\cE_1 \cap \{ \{1,j \} \in E(G_{n,k}) \}  \; | \; V(G_{n,1}), X_1, Y_1).
\end{align*}
Since the layers are i.i.d., this equals
\begin{align*}
2(m-1)&(X_1-1) \, \pr(\cE_1 \; | \; V(G_{n,1}), X_1, Y_1) \, \pr( \{1,j\} \in E(G_{n,2})   |  V(G_{n,1}), X_1, Y_1) \\
&\weq 2(m-1)(X_1-1) \, \Big( \I (\{ 1,2 \} \subset V(G_{n,1}) ) Y_1 \Big) \, \E \Big (\frac{(X_2)_2}{(n)_2} Y_2 \Big).
\end{align*}
%Based on this we condition on $(X_1, Y_1)$ and obtain
Taking $\E[\, \ldots  | \, X_1,Y_1] $ of the above gives
\begin{align}
\pr(\cE_1 & \cap (\{\hat D_{1,1} > 0 \} \cup \{ \hat D_{2,1} > 0\}) \; | \;  X_1, Y_1)  \nonumber\\
&\wle  2(m-1)(X_1-1) Y_1 \frac{(X_1)_2 }{(n)_2} \E \Big (\frac{(X_2)_2}{(n)_2} Y_2 \Big). \label{eq:conditionX1Y1}
\end{align}
We now use the simple identity
\begin{align*}
(X_1)_2 Y_1 &\weq (X_1)_2 Y_1 \I ((X_1)_2 Y_1 < n^{1/2}) + (X_1)_2 Y_1 \I ((X_1)_2 Y_1 \geq n^{1/2}).
\end{align*}
Together with $X_1 \leq n$ this yields
\begin{align*}
(X_1)_2 Y_1 / n &\wle n^{-1/2} + X_1 Y_1 \, \I((X_1)_2 Y_1  \geq n^{1/2}),
\end{align*}
and it follows that
\begin{align*}
\E[ (X_1-1) Y_1 (X_1)_2 / n ] \wle \E[ (X_1-1) n^{-1/2} ]  + \E[(X_1)_2 Y_1 \, \I((X_1)_2 Y_1 \geq n^{1/2})].
\end{align*}
The first term goes to zero by $P_{10}^{(n)} \to P_{10}$. Since it was assumed that $P^{(n)}$ converges weakly and $P_{21}^{(n)} \to P_{21}$, it follows that $(X_1)_2 Y_1$ is uniformly integrable, so the second term also goes to zero. %The assumption that $(X)_2 Y$ converges weakly and $P_{21}^{(n)} \to P_{21} $
 Thus, $\E[ (X_1-1) Y_1 (X_1)_2 / (n)_2 ] = o(n^{-1})$, and so \eqref{eq:conditionX1Y1} gives
\begin{align}
\begin{aligned}
\label{eq:on2}
\pr(\cE_1 \cap (\{\hat D_{1,1} > 0 \} \cup \{ \hat D_{2,1} > 0\})) &\wle 2(m-1) \E \Big (\frac{(X_2)_2}{(n)_2} Y_2) \Big) o(n^{-1}) \\
&\weq o(n^{-2}).
\end{aligned}
\end{align}
Returning to \eqref{eq:hatD}, it follows that
\begin{align}
f&_{2,n}(s,t) \,  \pr( \{1,2\}  \in E(G_n) ) \nonumber \\
% &\ \approx \;
% \sum_k \pr( D_{1,k} + \tilde D_{1,k} =s, \, D_{2,k} + \tilde D_{2,k} = t, \, \cE_k) \\
& \weq m \pr( D_{1,1} + \tilde D_{1,1} =s, \, D_{2,1} + \tilde D_{2,1} = t, \, \cE_1) +o(n^{-1}) \nonumber  \\
 &\weq m \sum_{s_1 \le s} \sum_{t_1 \le t} \tilde f_{2,n}(s_1,t_1)  \, \pr( D_{1,1} =s-s_1, D_{2,1} = t - t_1, \, \cE_1 ) + o(n^{-1}) \nonumber \\
% &\weq \sum_k \sum_{s_1 \le s} \sum_{t_1 \le t} \tilde f_{2,n}(s_1,t_1)
%   \, \pr( D_{1,k} =s-s_1, D_{2,k} = t - t_1, \, \cE_k ) \\
 &\weq m \pr(\cE_1) \sum_{s_1 \le s} \sum_{t_1 \le t} \tilde f_{2,n}(s_1,t_1) f_{2,n}'(s-s_1,t-t_1) + o(n^{-1}) \nonumber \\
 & \weq  \pr( \{1,2\}  \in E(G_n) )
   \sum_{s_1 \le s} \sum_{t_1 \le t} \tilde f_{2,n}(s_1,t_1) \, f_{2,n}'(s-s_1,t-t_1) + o(n^{-1}), \label{eq:f2npapprox}
\end{align}
where we have used \eqref{eq:on2} and the fact that $m/n$ is bounded. In the last step we invoked \eqref{eq:cEk}.

We now apply Lemma \ref{the:minfinity} to approximate the term $\tilde f_{2,n}(s_1,t_1)$, which represents the joint degree distribution in the model with $m-1$ layers. First, observe that the conditions in Lemma \ref{the:minfinity}, $P^{(n)} \to P$ weakly and $P_{10}^{(n)} \to P_{10} \in (0,\infty)$, are satisfied by our assumptions. Secondly, since $m/n \to \mu$, also $(m-1)/n \to \mu$. Thus, we can apply \eqref{eq:minfinity} (with $\tilde f_{2,n}(s_1,t_1)$ in place of $\pr(D_1=s,D_2=t)$, and similarly  $\pr(\tilde D_{1,k} = s_1)$, $\pr(\tilde D_{2,k} = t_1)$ in place of $\pr(D_1=s)$, $\pr(D_2=t)$), and obtain
\[
\tilde f_{2,n}(s_1,t_1) - \pr( \tilde D_{1,k} =s_1) \pr( \tilde D_{2,k} = t_1) \to 0.
\]
Next, we approximate $\pr( \tilde D_{1,k} =s_1)$ and $\pr( \tilde D_{2,k} =t_1)$. Namely, by Theorem \ref{T1}, the asymptotic degree distribution only depends on $\mu$  and the limiting type distribution $P$, and in particular, replacing $m$ by $m-1$ does not change these limits. Hence, $\pr( \tilde D_{1,k} =s_1)$ and $f_n(s_1)$ converge to the same number,
\[
\pr( \tilde D_{1,k} = s_1) - f_n(s_1) \to 0, \quad \text{and} \quad \pr( \tilde D_{2,k} = t_1) - f_n(t_1) \to 0,
\]
which gives the approximation
\begin{align*}
&f_{2,n}(s,t)  \pr( \{1,2\} \in E(G_n)) \\
&= \pr( \{1,2 \}  \in E(G_n) )
   \sum_{s_1 \le s} \sum_{t_1 \le t} \Big( f_{n}(s_1) f_n(t_1) \, f_{2,n}'(s-s_1,t-t_1) + o(1) \Big) \!+\! o(n^{-1}),
\end{align*}
where we note that since $s$ and $t$ do not depend on $n$, the  double sum equals $(f_n \otimes f_n) \ast f'_{2,n}) (s,t) + o(1)$. 
Furthermore, the identity $\pr(\cE_1) = \frac{P_{21}^{(n)}}{(n)_2}$ together with \eqref{eq:cEk} gives
$\pr( \{1,2 \} \in E(G_n)) = \frac{P_{21}^{(n)}}{(n)_2} m+O(n^{-2}) = \operatorname{\Theta}(n^{-1})$.  
As a consequence, dividing by  $\pr( \{1,2 \} \in E(G_n))$ yields
\begin{equation}
 \label{eq:BidegreeKeyLimit}
 \abs{ f_{2,n}(s,t) - ((f_n \otimes f_n) \ast f'_{2,n}) (s,t) }
 \wto 0
\end{equation}
for any $s,t \in \Z_+$, with $\ast$ denoting the convolution of probability measures on the additive group $\Z^2$. We know that $f_n \to \bar f_1$ weakly where $\bar f_1$ is the limiting model degree distribution in \eqref{eq:LimitingModelDegree}. Therefore, $f_n \otimes f_n \to \bar f_1 \otimes \bar f_1 $ weakly as probability measures on $\Z_+^2$. 

Let us investigate the limit of $f'_{2,n}$. We note that given $(X_k,Y_k)$ and the event $\cE_k = \{ \{1,2\} \in E(G_{n,k})\}$, the random variables $D_{1,k}$ and $D_{2,k}$ are independent, and both distributed according to $1 + \Bin(X_k-2,Y_k)$. Hence
\begin{align*}
 &\pr( D_{1,k} =s, D_{2,k} = t, \cE_k \cond X_k, Y_k) \\
 &\weq \pr( \cE_k \cond X_k, Y_k) \, \pr( D_{1,k} =s, D_{2,k} = t \cond \cE_k, X_k, Y_k) \\
 &\weq \frac{(X_k)_2}{(n)_2} Y_k \, \Bin(X_k-2, Y_k)(s-1) \Bin(X_k-2,Y_k)(t-1).
\end{align*}
By taking expectations above, and dividing the outcome by $\pr(\cE_k) = \E \frac{(X_k)_2}{(n)_2} Y_k = (n)_2^{-1} P^{(n)}_{21}$, it follows that
\begin{align*}
 f'_{2,n}(s,t)
 %&\weq \E \Big( \Bin(X_k-2,Y_k)(s-1) \Bin(X_k-2,Y_k)(t-1) \Big) \\
 &\weq \int \Bin(x-2,y)(s-1) \Bin(x-2,y)(t-1) \frac{(x)_2 y \, P^{(n)}(dx,dy)}{P^{(n)}_{21}} .
\end{align*}
When $P^{(n)} \to P$ weakly and $P^{(n)}_{21} \to P_{21} \in (0,\infty)$, it follows that $f'_{2,n}(s,t) \to f'_{2}(s-1,t-1)$ pointwise on $\Z_+^2$, where $f'_2$ is defined by \eqref{eq:BidegreePrime}.  Hence
\[
 (f_n \otimes f_n) \ast f'_{2,n}
 \wto \delta_{(1,1)} \ast (\bar f_1 \otimes \bar f_1)  \ast f'_{2}
\]
pointwise, where $\delta_{(1,1)}$ represents the edge between the two nodes. Combining this with \eqref{eq:BidegreeKeyLimit}, we conclude that Theorem~\ref{the:LimitingModeBidegree}:\eqref{ite:Weak} is valid.
\qed
\end{proof}

\subsection{Proof of Theorem~\ref{the:LimitingModeBidegree}:\eqref{ite:Wasserstein}}

%In the proof of Theorem~\ref{the:LimitingModeBidegree}:\eqref{ite:Wasserstein} we will need 
The following lemma gives sufficient conditions for the convergence of the third moment of the model degree distribution. The proof of Theorem~\ref{the:LimitingModeBidegree}:\eqref{ite:Wasserstein} then follows from Skorohod's coupling theorem and basic properties of size-biased distributions. 
\begin{lemma}
\label{the:DegreeMomentConvergence}
Assume that $P^{(n)} \to P$ weakly and $P^{(n)}_{rs} \to P_{rs} < \infty$ for $rs = 10,21,32,43$, with $P_{10} > 0$.
Then the third moments of the model degree distribution converge according to $\sum_s s^3 f_n(s) \to \sum_s s^3 {\bar f}(s) < \infty$. 
\end{lemma}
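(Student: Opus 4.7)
The plan is to sandwich the third moment $\sum_s s^3 f_n(s) = \E D_n^3$, where $D_n := \deg_{G_n}(1)$ (using node symmetry), between two quantities both converging to the third moment of the compound Poisson limit. The key observation is that the multi-edge degree
\[
\tilde D_n \weq \sum_{k=1}^m \deg_{G_{n,k}}(1),
\]
which counts every neighbour of node~$1$ with multiplicity equal to the number of layers producing that edge, satisfies $D_n\le \tilde D_n$ pointwise, and $\tilde D_n$ is a sum of i.i.d.\ layer-wise degrees $d_k:=\deg_{G_{n,k}}(1)$.

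First I would compute $\E \tilde D_n^3$ explicitly. Conditionally on $(X_{n,1},Y_{n,1})$ and on the event $\{1\in V(G_{n,1})\}$ (which has conditional probability $X_{n,1}/n$), the variable $d_1$ is $\Bin(X_{n,1}-1,Y_{n,1})$-distributed; otherwise $d_1=0$. Using the identity $x^3=(x)_3+3(x)_2+(x)_1$ together with $\E(B)_r=(x)_r y^r$ for $B\sim\Bin(x,y)$ gives
\[
n\E d_1 \weq P^{(n)}_{21}, \qquad n\E d_1^2 \weq P^{(n)}_{32}+P^{(n)}_{21}, \qquad n\E d_1^3 \weq P^{(n)}_{43}+3P^{(n)}_{32}+P^{(n)}_{21}.
\]
Multinomial expansion and exchangeability of the layers yield $\E\tilde D_n^3 = m\E d_1^3+3m(m-1)(\E d_1)(\E d_1^2)+m(m-1)(m-2)(\E d_1)^3$; passing to the limit with $m/n\to\mu$ and $P^{(n)}_{rs}\to P_{rs}$ gives
\[
\E \tilde D_n^3 \wto L := \mu(P_{43}+3P_{32}+P_{21}) + 3\mu^2 P_{21}(P_{32}+P_{21}) + \mu^3 P_{21}^3.
\]

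The next step is to verify that $L = \sum_s s^3\bar f(s)$. Representing $\bar f$ as the law of $S=\sum_{k=1}^\Lambda H_k$ with $\Lambda\sim\Poi(\lambda)$, $\lambda=\mu P_{10}$, and $H\sim g$, the standard cumulant formula yields $\E S^3 = \lambda\E H^3 + 3\lambda^2\,\E H\,\E H^2 + \lambda^3(\E H)^3$. Plugging \eqref{eq:LimitingModelDegreeIncrement} into $\E H^j$ and applying the same factorial-to-power identity shows that $P_{10}\E H^j$ matches the limit of $n\E d_1^j$ for $j=1,2,3$; substituting back gives $\E S^3 = L$, which also certifies that $\sum_s s^3 \bar f(s)<\infty$ whenever $P_{43}<\infty$.

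Finally, $D_n\le\tilde D_n$ gives $\limsup_n\E D_n^3\le L$, while Theorem~\ref{T1} provides $f_n\to\bar f$ weakly, so Skorohod's coupling combined with Fatou's lemma applied to the nonnegative sequence $D_n^3$ yields $\liminf_n\E D_n^3\ge\sum_s s^3\bar f(s) = L$. These two bounds force $\E D_n^3\to L$, which is the claim. The point that requires real care is the algebraic identity $\lim\E\tilde D_n^3 = \E S^3$: both sides are polynomials in $\mu$ and the $P_{rs}$, but they arise through structurally different routes (multinomial expansion of an i.i.d.\ layer sum versus compound Poisson cumulants), and it is precisely this coincidence that allows the sandwich to bypass any appeal to $P^{(n)}_{54}$, consistent with the hypotheses which only control cross moments up to $rs=43$.
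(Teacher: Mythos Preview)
Your proposal is correct and follows essentially the same route as the paper's proof: both bound $D_n$ above by the multi-edge sum $\tilde D_n=\sum_k \deg_{G_{n,k}}(1)$, compute $\E\tilde D_n^3$ via the multinomial expansion and the identities $n\E d_1^j = P^{(n)}_{21}, P^{(n)}_{32}+P^{(n)}_{21}, P^{(n)}_{43}+3P^{(n)}_{32}+P^{(n)}_{21}$, match the limit with the compound Poisson third moment $\lambda\E H^3+3\lambda^2\E H\E H^2+\lambda^3(\E H)^3$, and obtain the lower bound from weak convergence plus Fatou. The only cosmetic differences are that the paper invokes portmanteau rather than Skorohod for the Fatou step, and it replaces $m(m-1)$, $m(m-1)(m-2)$ by $m^2$, $m^3$ before passing to the $\limsup$, whereas you take the exact limit; both yield the same $L$.
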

\begin{proof}
Let $D_n = \deg_{G_n}(1)$ and let $D_*$ be a random variable with the asymptotic degree distribution defined by Theorem \ref{T1}. We write for short $X_i = X_{n,i}$ and $Y_i = Y_{n,i}$. Since $D_n \rightarrow D_*$ weakly, by portmanteau theorem and Fatou's lemma
\begin{align*}
\E (D_*^3) = \sum_{k=0}^\infty \pr (D_* \!>\! k^{1/3}) \leq \sum_{k=0}^\infty \liminf_n \pr (D_n \!>\! k^{1/3}) 
&\leq \liminf_n \sum_{k=0}^\infty \pr(D_n \!>\! k^{1/3}) \\
&= \liminf_n \E (D_n^3).
\end{align*}
It remains to show that $\limsup_n \E (D_n^3) \leq \E (D_*^3)$. Let $z_k \!=\! \sum_{j=2}^n \I ( \{1,j\} \! \in \! E(G_{n,k})) $  be the number of neighbours of $1$ produced by the layer $k$. Then $z_k$, $1 \leq k \leq m$, are independent and $D_n \leq \sum_{k=1}^m z_k $, so that
\begin{align*}
\E (D_n^3) \wle \E \Big( (\sum_{k=1}^m z_k)^3 \Big) &\weq m \E (z_1^3) + 6 \binom{m}{2} \E (z_1^2 z_2)+ 6\binom{m}{3} \E (z_1 z_2 z_3) \\
&\wle m \E (z_1^3) + 3(m \E(z_1^2))(m \E(z_1)) + (m \E(z_1))^3.
\end{align*}
Recall that the first three moments of $\Bin(n,p)$ are
\begin{gather}
\label{eq:binommoments}
\mu_1 = np, \quad \mu_2 = np + (n)_2 p^2, \quad \mu_3 = np + 3 (n)_2 p^2 + (n)_3 p^3.
\end{gather}
%On the event $\{ 1 \not\in V(G_{n,1} \}$ We have
Introduce the event $A = \{1 \in V(G_{n,1}) \}$. On the event $A^c$ we have $z_1 = 0$. Given $(X_k, Y_k)$ and $A$, the random variable $z_k$ is $\Bin(X_k-1,Y_k)$-distributed. Hence by \eqref{eq:binommoments}
\begin{align*}
%\E z_1 &= \E[ \E(\sum_{j=2}^n \I (1j \in E(G_{n,1}) \; | \; (X_1, Y_1) \, ) ] \\
% &= \E[ \E(\sum_{j=2}^n \I (1j \in E(G_{n,1}) \; | \; (X_1, Y_1) \, ) ] \\
%\E z_1 &= \E[ \E( \, \I (1 \in V(G_{n,1})) z_1 \; | \; X_1, Y_1 \, ) ] \\
%\E (z_1) = \E[ \E( \, z_1 \; | \; X_1, Y_1 \, ) ] &= \E [\pr(1 \in V(G_{n,1}) \;| \;X_1, Y_1) \E (z_1 \;|\; X_1, Y_1) ] \\
%\E (z_1) = \E[ \E( \, z_1 \; | \; X_1, Y_1, \I_A \, ) ] &= \E [\pr(1 \in V(G_{n,1}) \;| \;X_1, Y_1) \E (z_1 \;|\; X_1, Y_1) ] \\
\E (z_1) \weq \E[ \E( \, \I_A z_1 \; | \; X_1, Y_1 \, ) ] &\weq \E [\pr(A \;| \;X_1, Y_1) \E (z_1 \;|\; X_1, Y_1, A) ] \\
&\weq \E \Big( \frac{X_1}{n} (X_1-1) Y_1 \Big) \weq \frac{1}{n} P^{(n)}_{21},
\end{align*}
and similarly
\begin{align*}
\E (z_1^2) &\weq \E \Big( \frac{X_1}{n} \Big( (X_1-1) Y_1 + (X_1-1)_2 Y_1^2 \Big) \Big) \weq \frac{1}{n}  (P^{(n)}_{21} +  P^{(n)}_{32}), \\
\E (z_1^3) &\weq \E \Big( \frac{X_1}{n} \Big( (X_1-1) Y_1 + 3(X_1-1)_2 Y_1^2 + (X_1-1)_3 Y_1^3 \Big) \Big) \\
&\weq \frac{1}{n} (P^{(n)}_{21} + 3  P^{(n)}_{32} +  P^{(n)}_{43})
\end{align*}
Since $P^{(n)}_{rs}$ was assumed to converge for $rs = 10, 21, 32, 43$, it follows that 
\begin{align}
\label{eq:Dthirdmoment}
\limsup_n \E D_n^3 \wle  \mu (P_{21} + 3  P_{32} +  P_{43}) + 3 \mu^2(P_{21} +  P_{32}) P_{21} + \mu^3  P_{21}^3. 
\end{align}
Recall that $D_*$ may be represented as $D_* = \sum_{j=1}^\Lambda H_j$ where $\Lambda \sim \Poi(\mu P_{10})$ and
\[
\pr(H=l) \weq \frac{1}{P_{10}} \E \left(X \binom{X-1}{l}Y^l(1-Y)^{X-1-l} \right),
\quad l=0,1,2,\dots,
\]
where $\law(X,Y) = P$. From this and \eqref{eq:binommoments} we obtain
\begin{align*}
\E (H) \weq \sum_{l=0}^\infty l \,\pr(H=l) &\weq \frac{1}{P_{10}} \E \left(X \sum_{l=0}^\infty l \binom{X-1}{l}Y^l(1-Y)^{X-1-l} \right) \\
&\weq \frac{1}{P_{10}} \E \left(X (X-1) Y \right) \weq \frac{P_{21}}{P_{10}},
\end{align*}
and similarly
\begin{align*}
\E (H^2) &\weq \frac{1}{P_{10}} \E [ X ((X-1) Y + (X-1)_2 Y^2) ] \weq \frac{P_{21} + P_{32}}{P_{10}}, \\
\E (H^3) &\weq \frac{1}{P_{10}} \E [ X ((X-1) Y + 3(X-1)_2 Y^2 + (X-1)_3 Y^3) ] \\ &\weq \frac{P_{21} + 3P_{32} + P_{43}}{P_{10}}.
\end{align*}
Recall that the factorial moments of $\Poi(\lambda)$ are given by $\lambda^k$. It follows that
\begin{align*}
\E (D_*^3) \weq \E[\E(D_*^3 \, | \, \Lambda )] &\weq \E[\Lambda \E (H_1^3) + 3(\Lambda)_2 \E (H_1^2)\E (H_1) + (\Lambda)_3 \E (H_1)^3] \\
&\weq \mu (P_{21} + 3P_{32} + P_{43}) + 3 \mu^2 (P_{21} + P_{32})P_{21} + \mu^3 P_{21}^3 .
\end{align*}
The claim now follows by \eqref{eq:Dthirdmoment}.
\qed
\end{proof}

We are now ready to prove Theorem~\ref{the:LimitingModeBidegree}:\eqref{ite:Wasserstein}. The proof is similar to \cite[Theorem 3.2]{VanDerHofstad_Litvak_2014}, but slightly simpler because here we analyze model distributions instead of empirical distributions of random graph samples.

\begin{proof}[Proof of Theorem~\ref{the:LimitingModeBidegree}:(ii)]
Let $(D_{1,n}^*, D_{2,n}^*)$ be a random variable distributed according to the model bidegree distribution $f_{2,n}$ of $G_n$. Theorem~\ref{the:LimitingModeBidegree}:\eqref{ite:Weak} states that $(D_{1,n}^*, D_{2,n}^*) \!\to\! (D_{1}^*, D_{2}^*)$ weakly. Now let $\phi: \Z_+^2 \to \R$ be a function bounded by $\abs{\phi(x,y)} \le c(1 + x^2 + y^2)$. Skorohod's coupling theorem \cite[Theorem 4.30]{Kallenberg_2002} implies that there exist a probability space and some random variables $(\tilde D_{1,n}^*, \tilde D_{2,n}^*)  \stackrel{d}{=}   (D_{1,n}^*, D_{2,n}^*)$ and $(\tilde D_{1}^*, \tilde D_{2}^*) \stackrel{d}{=} (D_{1}^*, D_{2}^*)$ such that $(\tilde D_{1,n}^*, \tilde D_{2,n}^*) \to (\tilde D_{1}^*, \tilde D_{2}^*)$ almost surely. Then $Z_n := \phi(\tilde D_{1,n}^*, \tilde D_{2,n}^*) \to \phi(\tilde D_{1}^*, \tilde D_{2}^*) =: Z$  almost surely. Also $\abs{Z_n} \le c( 1+  (\tilde D_{1,n}^*)^2 + (\tilde D_{2,n}^*)^2) =: Z_n'$ a.s. 

Denote by $f_n$ the model degree distribution of $G_n$, and by $f_n^*$ its size-biased version. Recall that $\law(\tilde D_{1,n}^*) = \law(D_{1,n}^*)$ equals the first (equivalently, the second) marginal of $f_{2,n}$, and as in \eqref{eq:sizebias2}, this marginal equals
\[
 \sum_t f_{2,n}(s,t) \weq f_n^*(s)
 \weq \frac{s f_n(s)}{\sum_t t f_n(t)},
\]
hence $\law(D_{1,n}^*) = f^*_n$. We know (see for example \cite[Lemma 1.23]{Kallenberg_2002}) that for the size-biasing $f^*_n$, and for any measurable nonnegative function $\phi$,
\[
 \E \phi(D_{1,n}^*)
 \weq \sum_s \phi(s) f^*_n(s)
 \weq \frac{\sum_s \phi(s) s f_n(s)}{\sum_s s f_n(s)}
 \weq \frac{\E \phi(D_{1,n}) D_{1,n}}{\E D_{1,n}},
\]
where $\law(D_{1,n}) = f_n$. Especially, for $\phi(s) = s^2$, it follows that
\[
 \E ((D_{1,n}^*)^2)
 \weq \frac{\E D_{1,n}^3}{\E D_{1,n}}.
\]

With the help of Lemma~\ref{the:DegreeMomentConvergence}, we now note that
\[
 \E ((\tilde D_{1,n}^*)^2)
 \weq \frac{\E D_{1,n}^3}{\E D_{1,n}}
 \wto \frac{\E D_{1}^3}{\E D_{1}}
 \weq \E ((D_{1}^*)^2) < \infty,
\]
and hence $\E Z_n' \to \E Z' = c(1+2 \E ((D_{1}^*)^2)) < \infty$. Lebesgue's dominated convergence theorem (see the version in \cite[Theorem 1.21]{Kallenberg_2002}) now implies that $\E Z_n \to \E Z$, which confirms the claim.
\qed
\end{proof}
\subsection{Proof of Theorem \ref{the:LL1}}
We use the two following results shown in \cite{Bloznelis_Leskela_2019-12}. Theorem \ref{the:R1new} and Lemma \ref{the:kovo20-3new} give the power laws of $D_1$ and $D_1'$ when the assumptions of Theorem \ref{the:LL1} hold. The first claim \eqref{2020-02-24++5} then follows from the observation that $D_1^*$ has the same distribution as $1+D_1+D_1'$, and that $D_1'$ is the dominating term. In the proof of (\ref{2020-02-24++9}) we use the fact that random variables $D_1$, $D_2$ and $(D_1',D_2')$ are   independent and exploit the special structure of $(D_1',D_2')$: it is a mixture of (conditionally) independent binomial random variables.

\begin{theorem}[\cite{Bloznelis_Leskela_2019-12}, Theorem 4.1] \label{the:R1new}
Assume that the limiting layer type distribution equals $P(dx,dy) = p(dx)\delta_{q(x)} (dy)$ with
\begin{gather*}
p(x) \weq (a+o(1))x^{-\alpha} \quad \text{and} \quad q(x) \weq (b+O(x^{-1/2})) x^{-\beta}
\end{gather*}
as $x \to + \infty$ with exponents $\alpha>2$, $\beta \in (0,1)$ and constants $a,b > 0$. Then the limiting degree distribution satisfies
\begin{equation}
\label{2021-04-24****5}
\pr (D_1 =t ) \weq dt^{-\delta}(1+o(1)), \quad \text{as $t \to +\infty$,}
\end{equation}
where $\delta = 1+ \frac{\alpha-2}{1-\beta}$ and $d=\mu(1- \beta)^{-1} a b^{\delta-1}$. The same result holds for $\beta = 0$ if $b<1$.
\end{theorem}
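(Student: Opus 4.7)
My plan attacks Theorem~\ref{the:R1new} by combining the compound-Poisson representation of $D_1 = \sum_{k=1}^\Lambda H_k$ from Theorem~\ref{T1} (with $\Lambda \sim \Poi(\lambda)$, $\lambda = \mu P_{10}$, and i.i.d.\ $H_k \sim g$) with a Laplace/saddle-point analysis of the increment pmf
\[
 g(s) \weq \frac{1}{P_{10}} \sum_x x\, p(x)\, \Bin(x-1, q(x))(s).
\]
The argument splits into (i) establishing the regular-variation tail
$g(s) \sim \frac{a\, b^{\delta-1}}{(1-\beta) P_{10}}\, s^{-\delta}$,
and (ii) transferring this tail from the increment to the compound Poisson sum via the ``single big jump'' principle, $\pr(D_1 = t) \sim \lambda\, g(t)$. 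The resulting prefactor $\lambda \cdot \frac{a b^{\delta-1}}{(1-\beta) P_{10}} = \frac{\mu a b^{\delta-1}}{1-\beta}$ is exactly the $d$ in the theorem statement.

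For step (i), fix $s$ large. The binomial $\Bin(x-1, q(x))$ has mean $\mu_x := (x-1) q(x) \sim b\, x^{1-\beta}$ and variance $\sigma_x^2 \sim b\, x^{1-\beta}$ (since $q(x) \to 0$), so its pmf at $s$, viewed as a function of $x$, is concentrated near the saddle point $x_s := (s/b)^{1/(1-\beta)}$ defined by $\mu_{x_s} = s$. A local central limit theorem for binomials in the regime $q(x) \to 0$ provides a Gaussian approximation valid uniformly in an $x$-window of width $\asymp x_s^{(1+\beta)/2}$ around $x_s$ (inside which $x p(x) \sim a x^{1-\alpha}$ varies only by a relative $o(1)$ amount), while Chernoff-type bounds control the sum outside this window. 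Converting the inner $x$-sum to a Gaussian integral in $u = \mu_x - s$ and using the Jacobian $dx/du = 1/\mu_x'(x_s) \sim x_s^\beta/((1-\beta) b)$, with the Gaussian integrating to one, yields
\[
 g(s) \weq (1+o(1)) \cdot \frac{x_s\, p(x_s)}{P_{10}\, \mu_x'(x_s)}
 \weq (1+o(1)) \cdot \frac{a\, x_s^{1-\alpha+\beta}}{(1-\beta)\, b\, P_{10}}.
\]
Plugging in $x_s = (s/b)^{1/(1-\beta)}$ and simplifying $x_s^{1-\alpha+\beta} = (s/b)^{-\delta}$ (with $\delta = (\alpha-1-\beta)/(1-\beta) = 1 + (\alpha-2)/(1-\beta)$) delivers the announced tail $g(s) \sim \frac{a b^{\delta-1}}{(1-\beta) P_{10}}\, s^{-\delta}$. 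The assumed $O(x^{-1/2})$ correction in $q(x)$ perturbs $\mu_x$ and $\sigma_x$ by relatively negligible amounts and does not affect the leading-order constant.

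For step (ii), expand $\pr(D_1 = t) = \sum_{k \ge 0} e^{-\lambda} \lambda^k / k! \cdot g^{*k}(t)$. A regularly varying $g$ with tail index $-\delta$, $\delta > 1$, is subexponential in the local sense on $\Z_+$, so the standard single-big-jump asymptotic $g^{*k}(t) \sim k\, g(t)$ holds for each fixed $k$, with sufficient uniformity to interchange the limit with the rapidly decaying Poisson sum. This gives $\pr(D_1 = t) \sim g(t) \sum_{k \ge 0} e^{-\lambda}\lambda^k\, k/k! = \lambda\, g(t)$, which combined with step (i) proves the generic case $\beta \in (0,1)$. The boundary case $\beta = 0$ with $q(x) \equiv b < 1$ follows the same outline: $\Bin(x-1, b)$ has fixed success probability, the saddle lies at $x_s = s/b$ inside the body of the binomial, the Gaussian window has width $\asymp \sqrt{x_s}$, and the Laplace integration produces the same regular-variation prefactor (with $\delta = \alpha - 1$).

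The main obstacle is the uniform local central limit theorem for $\Bin(x-1, q(x))(s)$ over the $x$-window relevant to the Laplace method. Because $q(x) = b\, x^{-\beta}(1 + O(x^{-1/2}))$ is itself $x$-dependent and tends to zero, the standard Berry--Esseen/LCLT error estimates for binomials must be reexamined: one has to check that the Gaussian approximation holds with a controllable error simultaneously for all $x$ with $|s - \mu_x|/\sigma_x$ bounded, and that the peripheral contributions (small $x$ pushing $s$ into the right tail of $\Bin(x-1, q(x))$, and large $x$ pushing $s$ into the left tail) are dominated by the main term after integration against $a\, x^{1-\alpha}$, using Chernoff bounds on binomial large deviations. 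The hypotheses $\alpha > 2$ (controlling the weight $x p(x)$) and the improved $O(x^{-1/2})$ remainder in $q$ are the quantitative inputs that close these estimates.
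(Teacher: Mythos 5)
This statement is imported by the paper from \cite{Bloznelis_Leskela_2019-12} (Theorem 4.1) and is not proved here, so there is no in-paper proof to compare against; judged on its own, your outline is essentially correct and follows the route one would expect. Your step (i) is precisely the content of Lemma~\ref{the:kovo20-3new} (Lemma A.4 of the same reference, quoted in this paper) specialized to $x_k = k-1$, $y_k = q(k)$, $p_k = k\,p(k)/P_{10}$, i.e.\ $\xi=1$, $\eta=\beta$, $\gamma=\alpha-1$, which gives $\delta = 1+(\gamma-1)/(\xi-\eta) = 1+(\alpha-2)/(1-\beta)$ and the prefactor $a b^{\delta-1}/((1-\beta)P_{10})$ exactly as you compute; so you are in effect re-deriving that lemma by Laplace's method rather than invoking it. Your step (ii), the transfer $\pr(D_1=t)\sim \lambda g(t)$ through the compound Poisson sum, is the standard local single-big-jump argument; to make it rigorous you need local subexponentiality of the regularly varying pmf $g$ together with a Kesten-type bound $g^{*k}(t)\le C(1+\varepsilon)^k g(t)$ to justify exchanging the limit with the Poisson sum, both of which hold here since $\delta>1$. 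The constants check out: $\lambda\cdot a b^{\delta-1}/((1-\beta)P_{10}) = \mu a b^{\delta-1}/(1-\beta) = d$. The one place your write-up is only a sketch is the uniform local CLT for $\Bin(x-1,q(x))(s)$ over the saddle window together with the Chernoff control of the peripheral $x$; you correctly identify this as the technical heart and identify the right hypotheses ($\alpha>2$ for integrability of $x\,p(x)$, the $O(x^{-1/2})$ remainder in $q$) that close it, but those estimates would still need to be written out to have a complete proof.
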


\begin{lemma}[\cite{Bloznelis_Leskela_2019-12}, Lemma A.4] \label{the:kovo20-3new}
Consider a mixed binomial distribution $g(r) = \sum_{k\geq 1} p_k f_k(r)$ where $f_k = \Bin(x_k,y_k)$ and $(p_k)$ is a probability distribution on $\{1,2,\ldots \}$. Assume that as $k \to +\infty$
\begin{gather*}
x_k = (a+O(k^{-\xi/2}))k^{\xi}, \quad y_k = (b+O(k^{-\xi/2}))k^{-\eta}, \quad p_k = (c+o(1))k^{-\gamma},
\end{gather*}
for some $0 \leq \eta < \xi < \eta+2$ and $\gamma > 1$, and some $a,b,c > 0$ such that $\eta>0$ or $b<1$. Then
\[
g(r) \weq (d+o(1))r^{-\delta} \qquad \text{as $r \to +\infty$,}
\]
where $\delta = 1+ \frac{\gamma-1}{\xi-\eta}$ and $d= (ab)^{\delta-1} c/(\xi-\eta)$.
\end{lemma}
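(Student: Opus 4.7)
The driving heuristic is that $\Bin(x_k,y_k)$ has mean $m_k:=x_ky_k\sim ab\,k^{\xi-\eta}$ and variance $\sigma_k^2:=x_ky_k(1-y_k)\sim m_k$, using that $y_k\to 0$ when $\eta>0$ and $y_k\to b<1$ otherwise. Thus $f_k(r)$ is non-negligible only when $r$ lies within $O(\sqrt{m_k})$ of $m_k$. Inverting $m_k=r$ identifies the scale $k_\ast=k_\ast(r):=(r/(ab))^{1/(\xi-\eta)}$ at which the binomial mean matches $r$, and the width (in $k$-space) of the matching band is $W_r:=\sqrt{m_{k_\ast}}/|m'_{k_\ast}|\asymp r^{1/(\xi-\eta)-1/2}$. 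This width diverges precisely when $\xi<\eta+2$, which explains the hypothesis and dictates a saddle-point argument.

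The plan is to split $g(r)=\sum_k p_k f_k(r)$ into a central band $|k-k_\ast|\le L_r W_r$ with $L_r=\log r$ and two tail regions. For the tails I would apply a Chernoff/Bennett bound on $f_k(r)$ to show that once $|r-m_k|\ge L_r\sqrt{m_k}$ the point mass decays faster than any power of $r$; combined with $p_k\lesssim k^{-\gamma}$ (summable since $\gamma>1$), the tail contribution is $o(r^{-\delta})$. For the central band I would invoke a local central limit theorem for the binomial to write
\[
f_k(r) \weq \frac{1+o(1)}{\sqrt{2\pi\sigma_k^2}}\exp\!\Bigl(-\frac{(r-m_k)^2}{2\sigma_k^2}\Bigr),
\]
uniformly in $k$ across the band, using Stirling in the regime $\eta=0,\ b<1$ and a Poisson local limit theorem (with multiplicative error $1+O(y_k)$) in the regime $\eta>0$, where the rare-event structure brings $\Bin(x_k,y_k)$ uniformly close to $\Poi(m_k)$.

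Next I would replace $p_k$ by $(c+o(1))k_\ast^{-\gamma}$ throughout the band, since its relative variation is $o(1)$ (as $L_rW_r/k_\ast\to 0$), and linearize $m_k=r+m'_{k_\ast}(k-k_\ast)+O((k-k_\ast)^2/k_\ast)$. The resulting sum is a Riemann sum for a Gaussian integral in $u=k-k_\ast$, giving
\[
g(r) \weq (1+o(1))\,c\,k_\ast^{-\gamma}\int_{\mathbb R}\frac{1}{\sqrt{2\pi m_{k_\ast}}}\exp\!\Bigl(-\frac{(m'_{k_\ast})^2 u^2}{2 m_{k_\ast}}\Bigr)\,du \weq (1+o(1))\,\frac{c\,k_\ast^{-\gamma}}{|m'_{k_\ast}|}.
\]
Substituting $k_\ast=(r/(ab))^{1/(\xi-\eta)}$ and $m'_{k_\ast}\sim ab(\xi-\eta)k_\ast^{\xi-\eta-1}=(\xi-\eta)r/k_\ast$ collapses this to $d\,r^{-\delta}$ with $\delta=1+(\gamma-1)/(\xi-\eta)$ and $d=(ab)^{\delta-1}c/(\xi-\eta)$.

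The main obstacle is obtaining a local limit theorem for $\Bin(x_k,y_k)$ with a multiplicative $(1+o(1))$ error that is uniform across the whole central band, bridging the non-degenerate regime $y_k\to b<1$ and the rare-event regime $y_k\to 0$ in a single uniform statement. A secondary, bookkeeping difficulty is propagating the $O(k^{-\xi/2})$ perturbations in $x_k$ and $y_k$ and the $o(1)$ perturbation in $p_k$ through the saddle-point calculation without spoiling the leading order; note that the assumed error rate $O(k^{-\xi/2})$ shifts $m_k$ by $O(\sqrt{m_k})$, which matches exactly the width of the Gaussian window, so the hypothesis sits precisely at the threshold that the argument can tolerate.
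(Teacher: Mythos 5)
This lemma is not proved in the present paper: it is imported verbatim from \cite{Bloznelis_Leskela_2019-12} (Lemma A.4), so there is no in-paper proof to compare against. Judged on its own terms, your proposal is the standard and correct Laplace/saddle-point argument for mixed binomial power laws, and it is consistent in spirit with the Chernoff-plus-window technique the present paper itself uses in the proof of Theorem~\ref{the:LL1} (the cutoff $\delta_s=s^{1/2}\ln^4(2+s)$ there plays exactly the role of your $L_r\sqrt{m_k}$). Your identification of the matching scale $k_*=(r/(ab))^{1/(\xi-\eta)}$, the window width $\asymp r^{1/(\xi-\eta)-1/2}$ (which diverges precisely when $\xi<\eta+2$), the superpolynomial Chernoff bound off the window, and the final bookkeeping $g(r)\sim c\,k_*^{-\gamma}/|m'_{k_*}|=d\,r^{-\delta}$ are all correct, and the uniform local CLT you flag is standard since $|r-m_k|\le (\log r)\sqrt{m_k}=o(m_k^{2/3})$ on the band.

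One point deserves more care than your closing remark suggests. Writing $m_k=ab\,k^{\xi-\eta}(1+O(k^{-\xi/2}))$, the perturbation of the mean is $O(k^{\xi/2-\eta})$, which is $o(\sigma_k)$ whenever $\eta>0$ but is $\Theta(\sigma_k)$ in the borderline case $\eta=0$. Sitting \emph{at} the threshold is not automatically tolerable: an $O(\sigma_k)$, possibly non-monotone, shift of the mean changes each Gaussian term by a constant multiplicative factor and spoils the naive Riemann-sum identification of $\sum_k f_k(r)$ with $1/|M'(k_*)|$, so for $\eta=0$ you need an extra device (e.g., first averaging $r$ over a short window where $g$ is shown to be nearly constant, or a summation-by-parts/sandwich argument exploiting that $\sum_r f_k(r)=1$). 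Also note that $\sigma_k^2\sim(1-b)m_k$ rather than $\sim m_k$ when $\eta=0$; this is harmless for the final constant, since the Gaussian integral over the band equals $1/|m'_{k_*}|$ for any variance, but the local limit theorem should be stated with the correct $\sigma_k$. Neither issue affects the application made in this paper, where $x_k=k-2$ and $y_k=q(k)$ exactly, so the perturbations are $O(k^{-\xi})$ and the borderline case does not occur.
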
 

%Since we study the asymptotic distributions, the limits should be understood in the sense of $t \to \infty$. 
\begin{proof}[Proof of Theorem \ref{the:LL1}]
We first prove claim \eqref{2020-02-24++5}, $\pr(D_1^* = t) \weq (1+o(1))c' t^{-\frac{\alpha-2}{1-\beta}}$. Recall from Theorem \ref{the:LimitingModeBidegree} that the asymptotic bidegree distribution can be written as
\begin{equation}
\label{2021-04-24****1}
\pr(D_1^* = s+1 , D_2^* = t +1) \weq \pr (D_1 + D_1' = s, D_2+D_2' = t),
\end{equation}
where $D_1$ and $D_2$ follow the distribution $\bar f_1$ in  \eqref{eq:LimitingModelDegree} and $(D_1', D_2')$ follows the distribution $f_2'$ in  \eqref{eq:BidegreePrime}, $(D_1', D_2')$ is independent of $(D_1, D_2)$, and $D_1$ is independent of $D_2$.
Note that $D_1'$ is a mixed binomial random variable $D_1' \sim \Bin(X'-2, Y') $, where $(X',Y')$ has the distribution %with size distribution 
\[
%\pr(\tilde X = t) \weq \pr(X=t) \frac{(t)_2q(t)}{\E((X)_2 q(X))}.
\pr(X'=t, Y'=q(t)) \weq \pr(X'=t) \weq \pr(X=t) \frac{(t)_2q(t)}{\E((X)_2 q(X))}, \quad t=0,1,2,\ldots
\]
In Lemma \ref{the:kovo20-3new}, set $x_k = k-2$, $y_k = q(k) = bk^{-\beta}$, and $p_k = (k)_2 q(k) p(k) / P_{21} $, where we note that the exponent of $p_k=O(k^{2-\alpha-\beta})$ is less than $-1$ by the assumption $\alpha+\beta>3$. Then
\begin{align}
\label{eq:D1primeasymp}
%  \pr(D_1' = t) \weq (c'' + o(1))t^{-\frac{\alpha-2}{\gamma}} (1+o(1)).
  \pr(D_1' = t) \weq (c' + o(1))t^{-\frac{\alpha-2}{1-\beta}}.
\end{align}
%Note that the power law exponent of $D_1'$ is $\frac{\alpha-2}{\gamma}$, and that of $D_1$ is $1+\frac{\alpha-2}{\gamma}$. 
Denote the power law exponents of $D_1$ (given by Theorem \ref{the:R1new}) and $D_1'$ by
\[
\alpha_{D} \,:=\, 1+(\alpha-2)/(1-\beta), \qquad \alpha_{D'} \,:=\, (\alpha-2)/(1-\beta).
\]
A standard argument (\cite{mikosch1999}) shows that  $\pr(D_1+D_1' > t) \sim \pr(D_1 > t) + \pr(D_1' > t)$.  Now  $\alpha_{D} > \alpha_{D'}$ implies
\[
\pr (D_1 + D_1' = t) = (1+o(1)) \pr(D_1' = t).
\]
From (\ref{2021-04-24****1}) and  (\ref{eq:D1primeasymp})
we obtain $ \pr(D_1^*=t)=(1+o(1))c' t^{-\frac{\alpha-2}{1-\beta}}$, thus showing  \eqref{2020-02-24++5}.
Let us prove 
\eqref{2020-02-24++9}, i.e., $\pr ( D_1^* = t_1, D_2^* = t_2 ) = (1+o(1))c''(t_2-t_1)^{-1-\frac{\alpha-2}{1-\beta}} t_1^{-\frac{\alpha-2}{1-\beta}}$.
We start with an outline of the proof.
%In the proof of \eqref{2020-02-24++9} 
In order to determine the asymptotics of the bivariate probability (\ref{2021-04-24****2}) we use the observation 
%We observe 
that large values of mixed binomial random vector $(D_1',D_2')$ concentrate around  the diagonal $D_1'=D_2'$. We justify this claim by proving that values 
%that are far away 
laying far apart from the diagonal have superpolynomially small probabilities. Namely, for  any $s_0>0$ and all
$s_0<s+\delta_s+\delta_t<t$ we have 
\begin{equation}
\label{2020-02-27}
\pr(D_1'=s,D_2'=t) \le e^{-c_{**}\ln^8t},
\end{equation}
where the constant $c_{**}>0$ depends on $s_0$, 
$\beta$, and $b$. The particular form of $\delta_t=t^{1/2}\ln^4(2+t)$ is related to the  exponential bounds for binomial probabilities below. With this observation in mind we expand the probability
\begin{equation}
\label{2021-04-24****2}
\pr(D_1^*=t_1+1,D_2^*=t_2+1) = \pr(D_1+D_1'=t_1,D_2+D_2'=t_2)
\end{equation}
into the sum  
\begin{equation}\label{2021-04-24**15}
\sum_{(i,j)\in[0,t_1]\times [0,t_2]}\pr (D_1'=i,D_2'=j)
\pr(D_1=t_1-i)
\pr(D_2=t_2-j),
\end{equation}
locate a region of $(i,j)$ that gives the leading term (see region $A_{4}$ below) and show that the contribution of the remaining part  is negligible.
Note that our assumptions $t_2>t_1$ and $\delta_{t_2}=o(t_2-t_1)$ make the sum (\ref {2021-04-24**15}) asymmetric.

Let us prove (\ref{2020-02-27}).
In the proof we use the upper bound for binomial probabilities
\begin{equation}
\label{2021-04-24**1}
\Bin(x,p)(s) \wle e^{-0.5\frac{\delta^2}{(s+\delta)}}, \qquad \forall s,\delta: \, \delta>0, \, \abs{s-xp}  \geq \delta
 \end{equation}
that follows from Chernoff bounds (see, e.g., Theorem 2.1 in \cite{Janson_Luczak_Rucinski_2000}) combined with
the simple inequality $\pr(X = t) \leq \min \{\pr(X \leq t), \pr (X \geq t) \}$ (cf. \cite{Bloznelis_Leskela_2019-12}).

Setting $p=q(x)$ and $\delta=\delta_s$  in 
(\ref{2021-04-24**1}) gives
\begin{align*}
\Bin(x,q(x))(s) \wle e^{-0.5\frac{\delta_s^2}{s+\delta_s} } \weq e^{-0.5 \frac{s\ln^8(2+s)}{s+s^{1/2}\ln^4(2+s)} },
\end{align*}
and approximating the exponent for large $s$ gives
\begin{align}
\label{eq:chernoff}
& \max_{x:\,|s-xq(x)|\ge \delta_s}\Bin(x,q(x))(s) \wle (1+o(1))e^{-0.5\ln^8s}.
\end{align}
 From (\ref{2021-04-24**1})
 we also obtain that 
\begin{equation}
\label{2021-04-24**2}
\Bin(x,q(x))(s)\le e^{-c_*\ln^8(xq(x))} \quad {\text{for}} \quad  xq(x) > s+\delta_s,
\end{equation}
where $c_*>0$ depends on $\beta$ and $b$. 
To show (\ref{2021-04-24**2}) we write $xq(x)$ in the form $xq(x)=s(1+\tau)$ and give a lower bound for the ratio
$(xq(x)-s)^2/(xq(x))=s\tau^2/(1+\tau)$ in the exponent of (\ref{2021-04-24**1}). For 
$s^{-0.5}\ln^4s\le \tau\le 1$ this ratio is at least
\begin{displaymath}
\frac{s\tau^2}{2} \ge \frac{\ln^8s}{2} \ge \frac{1}{2}\ln^8\left(\frac{xq(x)}{2}\right) \ge c_*\ln^8(xq(x)).
\end{displaymath}
For $\tau\ge 1$ the ratio is 
\begin{displaymath}
\frac{xq(x)}{(1+\tau)^2}\tau^2 \ge  \frac{xq(x)}{4} \ge c_*\ln^8(xq(x)).
\end{displaymath}

Now we are ready to prove (\ref{2020-02-27}).
Since $D_1', D_2'$ are conditionally independent and binomial, we have
\begin{displaymath}
\pr(D_1'=s, D_2'=t )  \weq  \sum_{x\ge s} \Bin(x-2,q(x))(s) \, \Bin(x-2,q(x))(t) \, \pr(X'=x).
\end{displaymath}
Furthermore, from (\ref{eq:chernoff}) and (\ref{2021-04-24**2}) we have
\begin{displaymath}
 \Bin(x-2,q(x))(s) \cdot \Bin(x-2,q(x))(t)  \le e^{-c_{**}\ln^8t}.
\end{displaymath}
Indeed,  for $xq(x)<t-\delta_t$ the second probability is at most of order $e^{-c_{**}\ln^8t}$ by (\ref{eq:chernoff}). For $xq(x)>t-\delta_t$ we have  $xq(x)>s+\delta_s$ and $xq(x)\ge t/2$. Now we use (\ref{2021-04-24**2}) to bound the first binomial probability  by $e^{-c_*\ln^8(t/2)}$ from above. The proof of (\ref{2020-02-27}) is complete.
 
Now we evaluate \eqref{2021-04-24**15}. Given $0<\varepsilon<0.1$, let 
\begin{displaymath}
t_1^*=\varepsilon \min\{t_1, t_2-t_1\}, \qquad
t_2^*=
\begin{cases}
(t_2/\varepsilon^{1+\alpha_D})^{1/\alpha_D} \quad {\text{for}} \quad  2t_1>t_2, \\
\varepsilon t_2, \qquad \qquad \qquad
{\text{for}} \quad  2t_1<t_2.
\end{cases}
\end{displaymath}
We split $[0,t_1]\times [0,t_2]=A_0\cup A_1\cup A_2\cup A_3$,
where 
\begin{align*}
A_0 &= [0, t_1/2]\times [0,t_1/2], \qquad \quad A_1 = \bigl( [0, t_1- t_1^*]\times[0,t_2-t_2^*] \bigr) \setminus A_0, \quad \\
A_2 &= (t_1-t_1^*, t_1]\times[0,t_2], \qquad \quad \! \! \! 
A_3 = [0,t_1-t_1^*]\times (t_2-t_2^*, t_2],
\end{align*} 
and split 
$A_2=A_{4}\cup A_{5}\cup A_{6}$, where
\begin{align*}
&A_{4} = \{(i,j): t_1-t_1^*\le i\le t_1, \,  i-3\delta_i\le j\le i+3\delta_{t_2}\},
\\
&A_{5} = \{ (i,j): t_1-t_1^*\le i\le t_1, \,  0\le j< i-3\delta_i \}, \\
&A_{6} = \{ (i,j): t_1-t_1^*\le i\le t_1, \,  i+3\delta_{t_2}< j\le t_2\}.
\end{align*}
We denote 
${\cal R}=t_1^{-\alpha_{D'}}(t_2-t_1)^{-\alpha_D}$ 
and write for short
\begin{gather*}
S_{A_k}=\sum_{(i,j)\in A_k}h(i,j),\\
 h(i,j)= \pr (D_1'=i,D_2'=j) \pr (D_1=t_1-i) \pr(D_2=t_2-j).
\end{gather*}
In order to determine the asymptotics of  (\ref{2021-04-24**15}) we split 
\begin{displaymath}
\pr(D_1+D_1'=t_1,\, D_2+D_2'=t_2) = S_{A_0}+S_{A_1}+S_{A_2}+S_{A_3},
\end{displaymath}
and show that
\begin{eqnarray}
\label{2021-04-24++10}
&&
S_{A_2}=(1+o_{\varepsilon}(1))\pr(D_2=t_2-t_1)\pr(D_1'=t_1)+o({\cal R}),
\\
\label{2021-04-24+1}
&&
S_{A_0}+S_{A_1}+S_{A_3}\le c\varepsilon {\cal R}+c\varepsilon^{-2\alpha_D}t_1^{1-\alpha_D}{\cal R}+ o({\cal R}).
\end{eqnarray}
Here $o_{\varepsilon}(1)\to 0$ as $\varepsilon \to 0$ and   $t_1,t_2\to+\infty$ so that $\delta_{t_2}=o(t_2-t_1)$. 
Note that (\ref{2021-04-24****2}), (\ref{2021-04-24++10}), (\ref{2021-04-24+1}) imply 
\begin{displaymath}
\pr(D_1^*=t_1+1,D_2^*=t_2+1) = (1+o(1))\pr(D_2=t_2-t_1)\pr(D_1'=t_1).
\end{displaymath}
Invoking the asymptotic formulae
(\ref{2021-04-24****5}), (\ref{eq:D1primeasymp}) for probabilities
$\pr(D_2=t_2-t_1)$ and $\pr(D_1'=t_1)$ as   $t_2-t_1\to+\infty$ and $t_1\to+\infty$ we obtain (\ref{2020-02-24++9}).

We complete the proof by showing (\ref{2021-04-24++10}) and (\ref{2021-04-24+1}). We first prove (\ref{2021-04-24+1}).
Using the inequalities  (see \eqref{2020-02-24++5}, (\ref{2021-04-24****5})) 
\begin{align*}
&\pr(D_1=t_1-i) \le c t_1^{-\alpha_D}, \quad \quad \,
\pr(D_2=t_2-j) \le ct_2^{-\alpha_D} \qquad \quad \; \text{for} \quad (i,j)\in A_0, \\
&\pr(D_1=t_1-i)\le c (t_1^*)^{-\alpha_D}, \quad \pr(D_2=t_2-j)\le c(t_2^*)^{-\alpha_D} \qquad {\text{for}} \quad (i,j)\in A_1, \\
&\quad \sum_{(i,j)\in A_1}\pr(D_1'=i,D_2'=j) \le \pr(D_1'\ge t_1/2)+\pr(D_2'\ge t_1/2) \le ct_1^{1-\alpha_{D'}},
\end{align*}
we bound the sums
\begin{eqnarray}
%\label{2021-04-23+1}
\nonumber
&&
S_{A_0} \le c t_1^{-\alpha_D} t_2^{-\alpha_D}  \sum_{(i,j)\in A_0} \pr(D_1'=i, D_2'=j)  \le  ct_1^{-\alpha_D} t_2^{-\alpha_D} = o({\cal R}), \\
\nonumber && S_{A_1} \le  c (t_1^*)^{-\alpha_D} (t_2^*)^{-\alpha_D} \sum_{(i,j)\in A_1} \pr(D_1'=i, D_2'=j) \\
\label{2021-04-23+2}
&&
\qquad \le c(t_1^*)^{-\alpha_D} (t_2^*)^{-\alpha_D} t_1^{1-\alpha_{D'}}.\end{eqnarray}
Note that the quantity in  (\ref{2021-04-23+2})
equals
$c\varepsilon (t_2-t_1)^{-\alpha_D}t_2^{-1}t_1^{1-\alpha_{D'}}\le c\varepsilon {\cal R}$  for $2t_1>t_2$. For $2t_1\le t_2$ this quantity  equals
$c\varepsilon^{-2\alpha_D} t_2^{-\alpha_D} t_1^{1-\alpha_D-\alpha_{D'}} \le  c\varepsilon^{-2\alpha_D}t_1^{1-\alpha_D}{\cal R}.$

Next we estimate $S_{A_3}$. We have
\begin{eqnarray}
\label{2021-04-22}
&& S_{A_3} \le  \sum_{(i,j)\in A_3} \pr(D_1'=i, D_2'=j) \le  \sum_{(i,j)\in A_3} e^{-c_{**}\ln^8{j}} \\
\label{2021-04-23+5}
&& \qquad \le  t_1t_2 e^{-c_{**}\ln^8(t_2-t_2^*)} \le e^{-c_{**}'\ln^8t_2} = o({\cal R}).
\end{eqnarray}
The first inequality of  (\ref{2021-04-22}) is obvious. The second one follows from
(\ref{2020-02-27}). To verify the condition
$i+\delta_i+\delta_j<j$ for $(i,j)\in A_3$ we write
\begin{displaymath}
i+\delta_i+\delta_j \le t_1 + \delta_{t_1} + \delta_{t_2} < t_2-t_2^* \le j.
\end{displaymath}
Note that the second inequality above follows from the fact that $\delta_{t_2}=o(t_2-t_1)$. In particular, this inequality is obvious for $2t_1<t_2$. For $2t_1>t_2$ the inequality follows from 
$t_2^*<\delta_{t_2}$ (note that $\alpha_{D}>2$).

\medskip

We secondly prove (\ref{2021-04-24++10}).
We split
$ S_{A_2} = S_{A_4}+ S_{A_5}+S_{A_6} $
and  show that $S_{A_4}$ satisfies 
(\ref{2021-04-24++10}) while 
$S_{A_5}+S_{A_6}=o({\cal R})$.
We derive the latter bound using
(\ref{2020-02-27}), which implies
\begin{align*}
&\pr(D_1'=i, D_2'=j) \le ce^{-c_{**}\ln^8i} \le  e^{-c_{**}'\ln^8 t_1} \quad
{\text{ for}} \quad (i,j)\in A_{5}, \\
&\pr(D_1'=i, D_2'=j) \le  ce^{-c_{**}\ln^8j} \le  e^{-c_{**}'\ln^8t_2} \quad
{\text{ for}}
\quad (i,j)\in A_{6}.
\end{align*}
In the very last step we invoked  inequalities $\ln j \ge  \ln (3\delta_{t_2}) \ge  0.5\ln t_2$. For $(i,j)\in A_{5}$ we will  use, in addition,  the inequality $\pr(D_2=t_2-j)\le c(t_2-t_1)^{-\alpha_D}$.
We have
\begin{displaymath}
%\label{2021-04-23+10}
S_{A_5} \le  c(t_2-t_1)^{-\alpha_D} \sum_{(i,j)\in A_5} \pr(D_1'=i,D_2'=j) \le c
(t_2-t_1)^{-\alpha_D} t_1^2 e^{-c'_{**}\ln^8t_1}.
\end{displaymath}
Here $t_1^2$ bounds the number of summands in the double sum. 
We conclude that  $S_{A_5}\le c (t_2-t_1)^{-\alpha_D}e^{-c''_{**}\ln^8t_1}=o({\cal R})$. The proof of $S_{A_6}=o({\cal R})$ is simpler,
\begin{equation}
\label{2021-04-23+11}
S_{A_6} \le \sum_{(i,j)\in A_6} \pr(D_1'=i,D_2'=j) \le t_1t_2e^{-c'_{**}\ln^8t_2} \le e^{-c''_{**}\ln^8t_2} = o({\cal R}),
\end{equation}
where $ t_1t_2$ bounds the number of summands in the double sum.

Now consider $S_{A_4}$. Denote  $h'_{i,j}= \pr(D_1'=i,D_2'=j) \pr(D_1=t_1-i)$ and put
\begin{align*}
S'_{A_4} = \sum_{(i,j)\in A_4} h'_{i,j}, \qquad S'_{A_2} = \sum_{(i,j)\in A_2} h'_{i,j}, \qquad S^*=\sum_{t_1-t_1^*\le i\le t_1}\sum_{j\ge 0}h'_{i,j}.
\end{align*}
Note that uniformly in $(i,j)\in A_4$ we have
\begin{equation}
\label{2021-04-23+12}
\pr(D_2=t_2-j) = (1+o_{\varepsilon}(1))\pr(D_2=t_2-t_1), 
\end{equation}
where $o_{\varepsilon}(1)\to 0$ as $\varepsilon \to 0$ and $t_1,t_2\to+\infty$, $(t_2-t_1)/\delta_{t_2}\to+\infty$. Furthermore, uniformly in $t_1-t_1^*\le i\le t_1$ we have as $t_1\to+\infty$
\begin{equation}
\label{2021-04-23+13}
\pr(D_1'=i) = (1+o_{\varepsilon}(1))\pr(D_1'=t_1).
\end{equation}
Now,
using (\ref{2021-04-23+12}) we write
\begin{equation}\label{2021-04-24**17}
S_{A_4}= (1+o_{\varepsilon}(1))\pr(D_2=t_2-t_1) S'_{A_4}.
\end{equation}
Then, proceeding as in the proof of $S_{A_5}+S_{A_6}=o({\cal R})$ above, we approximate
\begin{equation}\label{2021-04-24**16}
S'_{A_4}=S'_{A_2} + O\bigl(e^{-c'_{**}\ln^8t_1}\bigr) + O\bigl(e^{-c''_{**}\ln^8t_2}\bigr).
\end{equation}
Finally, we evaluate $S'_{A_2}$. We write $S'_{A_2}$ in the form $ S'_{A_2} = S^*-R^*$, where
\begin{align*}
R^* &= \sum_{t_1-t_1^*\le i\le t_1} \sum_{j> t_2} \pr(D_1'=i, D_2'=j) \pr(D_1=t_1-i) \\
&\le \sum_{t_1-t_1^*\le i\le t_1} \sum_{j> t_2}e^{-c_{**}\ln^8j} \\ 
&\le t_1e^{-c_{**}''\ln^8t_2} = o({\cal R}),
\end{align*}
where in the first inequality we use (\ref{2020-02-27}) and $\pr(D_1=t_1-i)\le 1$. We apply  (\ref{2021-04-23+13})  to the sum $S^*$,
\begin{align*}
S^* &= \sum_{t_1-t_1^*\le i\le t_1} \pr(D_1'=i) \pr(D_1=t_1-i) \\
&= (1+o_{\varepsilon}(1))\pr(D_1'=t_1) \sum_{t_1-t_1^*\le i\le t_1} \pr(D_1=t_1-i) \\
&= (1+o_{\varepsilon}(1))\pr(D_1'=t_1).
\end{align*}
We conclude that 
$S'_{A_2} = (1+o_{\varepsilon}(1))\pr(D_1'=t_1)+o({\cal R})$.
Now (\ref{2021-04-24**17}), (\ref{2021-04-24**16}) imply  
\begin{displaymath}
S_{A_4}=(1+o_{\varepsilon}(1)) \pr(D_2=t_2-t_1) \pr(D_1'=t_1)+o({\cal R}).
\end{displaymath}
The proof of (\ref{2021-04-24++10}) is complete.

\qed
\end{proof}

\subsection{Correlation of the limiting bidegree distribution}

Let us analyze the Pearson correlation coefficient $\Cor(D_1^*, D_2^*)$ of the limiting bidegree distribution in Theorem~\ref{the:LimitingModeBidegree}. 

\begin{proposition}
\label{the:LimitingBidegreeCorrelation}
For any $\mu \in (0,\infty)$ and any probability measure $P$ on $\Z_+ \times [0,1]$ such that $0 < P_{10}, P_{21} < \infty$ and $P_{32}, P_{43} < \infty$, the random variables $(D_1^*, D_2^*)$ in \eqref{eq:BidegreeRepresentation} satisfy
\[
 \Cor(D_1^*, D_2^*)
 \weq \frac{P_{21}( P_{43} + P_{33}) - P_{32}^2}
 {P_{21}(P_{43} + P_{32}) - P_{32}^2 + \mu P_{21}^2 ( P_{21} + P_{32})}.
 %\weq \frac{\frac{P_{43} + P_{33}}{P_{21}} - \left( \frac{P_{32}}{P_{21}} \right)^2}
% {\frac{P_{43} + P_{32}}{P_{21}} - \left( \frac{P_{32}}{P_{21}} \right)^2 + \lambda \frac{P_{32} + P_{21} }{P_{10}}}.
\]
\end{proposition}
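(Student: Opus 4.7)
The plan is to exploit the independence structure of \eqref{eq:BidegreeRepresentation} to reduce the Pearson correlation to a handful of first and second moments, each of which can then be evaluated by the cross-moment bookkeeping already used in Lemma~\ref{the:DegreeMomentConvergence}. Since $D_1$ and $D_2$ are i.i.d.\ and jointly independent of $(D_1',D_2')$, and the additive constants in $D_i^*=1+D_i+D_i'$ cancel in variances and covariances, one immediately gets
\[
 \operatorname{Cov}(D_1^*,D_2^*) \weq \operatorname{Cov}(D_1',D_2'),
 \qquad
 \operatorname{Var}(D_1^*) \weq \operatorname{Var}(D_1) + \operatorname{Var}(D_1').
\]
So the task reduces to computing $\operatorname{Var}(D_1)$, $\operatorname{Var}(D_1')$, and $\operatorname{Cov}(D_1',D_2')$ in terms of the cross moments $P_{rs}$.

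For $D_1$ the compound Poisson representation $D_1=\sum_{k=1}^\Lambda H_k$ with $\Lambda\sim\Poi(\mu P_{10})$ and $\law(H_k)=g$ yields $\operatorname{Var}(D_1) = (\mu P_{10})\,\E H^2$. The moments $\E H = P_{21}/P_{10}$ and $\E H^2 = (P_{21}+P_{32})/P_{10}$ are precisely those computed in the proof of Lemma~\ref{the:DegreeMomentConvergence} from the binomial moment identity \eqref{eq:binommoments}, which gives $\operatorname{Var}(D_1)=\mu(P_{21}+P_{32})$.

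For $(D_1',D_2')$ I would condition on $(X',Y')$, under which the two coordinates are independent and each $\Bin(X'-2,Y')$-distributed. Using this together with \eqref{eq:binommoments},
\[
 \E\bigl[D_1'D_2'\mid X',Y'\bigr] \weq \bigl((X'-2)Y'\bigr)^2,
 \quad
 \E\bigl[(D_1')^2\mid X',Y'\bigr] \weq (X'-2)Y' + (X'-2)_2(Y')^2.
\]
Averaging against the size-biased measure $\frac{(x)_2 y\,P(dx,dy)}{P_{21}}$ and collapsing the polynomial factors in $X$ into falling factorials via $(X-2)(X)_2=(X)_3$, $(X-2)_2(X)_2=(X)_4$, and the key identity
\[
 (X-2)^2(X)_2 \weq (X)_4+(X)_3,
\]
obtained from $(X-2)^2=(X-2)(X-3)+(X-2)$, one gets
\[
 \E D_1' \weq \frac{P_{32}}{P_{21}},
 \quad
 \E(D_1')^2 \weq \frac{P_{32}+P_{43}}{P_{21}},
 \quad
 \E D_1'D_2' \weq \frac{P_{33}+P_{43}}{P_{21}},
\]
hence $\operatorname{Var}(D_1')=(P_{21}(P_{32}+P_{43})-P_{32}^2)/P_{21}^2$ and $\operatorname{Cov}(D_1',D_2')=(P_{21}(P_{33}+P_{43})-P_{32}^2)/P_{21}^2$. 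Substituting these into $\Cor(D_1^*,D_2^*) = \operatorname{Cov}(D_1',D_2')/\sqrt{\operatorname{Var}(D_1^*)\operatorname{Var}(D_2^*)}$ (with equal marginal variances) and clearing the common factor $P_{21}^2$ from numerator and denominator produces the stated formula. The only step beyond routine moment bookkeeping is the falling-factorial identity that makes the $P_{33}$ term in the numerator appear; once this is in hand, the conclusion is a one-line simplification.
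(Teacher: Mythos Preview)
Your proposal is correct and follows essentially the same approach as the paper: both reduce via the independence structure to $\Cov(D_1',D_2')/(\Var(D_1)+\Var(D_1'))$, evaluate $\Var(D_1)$ from the compound Poisson representation, and compute the $D_i'$ moments by conditioning on $(X',Y')$ and collapsing via the same falling-factorial identity $(x-2)^2(x)_2=(x)_4+(x)_3$. The only cosmetic difference is that you invoke Lemma~\ref{the:DegreeMomentConvergence} for the moments of $H$, whereas the paper recomputes them inline.
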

\begin{proof}
Recall the distribution of $(D_1', D_2')$,
\begin{equation*}
  f'_{2}(s,t) \weq \int_{\Z_+ \times [0,1]} \Bin(x-2,y)(s) \Bin(x-2,y)(t) \frac{(x)_2 y \, P(dx,dy)}{P_{21}}.
\end{equation*}
If $B$ is a $\Bin(x-2, y)$-distributed random variable, then $\E B = (x-2)y$ and $\E (B)_2 = (x-2)_2 y^2$, from which we conclude that $\E B^2 = \E (B)_2 + \E B = (x-2)_2 y^2 + (x-2)y$. Because $(x-2)(x)_2 = (x)_3$, it follows that
\[
 \E D'_1
 \weq \int (x-2)y \frac{(x)_2 y P(dx,dy)}{P_{21}}
 \weq \frac{P_{32}}{P_{21}}.
\]
Further, by noting that $(x-2)_2 (x)_2 = (x)_4$, we see that
\begin{align*}
 \E (D'_1)^2
 \weq \int \Big( (x-2)_2 y^2 + (x-2) y \Big) \frac{(x)_2 y P(dx,dy)}{P_{21}}
 \weq \frac{P_{43} + P_{32}}{P_{21}}.
\end{align*}
Hence $D'_1$ has a finite second moment, and variance equal to 
\begin{equation}
 \label{eq:VarPrime}
 \Var( D'_1 )
 \weq \frac{P_{43} + P_{32}}{P_{21}} - \left( \frac{P_{32}}{P_{21}} \right)^2.
\end{equation}
Similarly, the conditional independence of $D_1'$ and $D'_2$, together with the formula $(x-2)^2 (x)_2 = (x-2) (x)_3 = (x)_4 + (x)_3$, implies that
\begin{align*}
 \E D'_1 D'_2
 \weq \int \big( (x-2)y \big)^2  \frac{(x)_2 y P(dx,dy)}{P_{21}} 
 \weq \frac{P_{43} + P_{33}}{P_{21}},
\end{align*}
and hence, noting that $D'_1$ and $D'_2$ are identically distributed,
\begin{equation}
 \label{eq:CovPrime}
 \begin{aligned}
 \Cov(D'_1, D'_2)
 %&\weq \frac{1}{P_{21}} \Big( P_{43} + 2P_{32} + P_{33} + P_{21} \Big)
 %- \left( \frac{P_{32} + P_{21}}{P_{21}} \right)^2 \\
 &\weq \frac{P_{43} + P_{33}}{P_{21}} - \left( \frac{P_{32}}{P_{21}} \right)^2.
 \end{aligned}
\end{equation}
Recall next that $D_1$ follows the compound Poisson distribution $\bar f_1 = \CPoi(\lambda, g)$, and that the variance of $\CPoi(\lambda, g)$ equals $\lambda \int x^2 \, g(dx)$ (\cite{daykin1993practical}, eq. (3.2.13)). A simple computation confirms that the second moment of $g$ in \eqref{eq:LimitingModelDegreeIncrement} equals $ \frac{P_{32} + P_{21} }{P_{10}}$. 
%Since the variance $\CPoi(\lambda, g)$ equals $\lambda \int x g(dx)$, and 
Hence it follows that $D_1$ has a finite second moment with 
\begin{equation}
 \label{eq:VarDeg}
 \Var(D_1)
 \weq \lambda \frac{P_{32} + P_{21} }{P_{10}}.
\end{equation}
The mutual independence of $D_1$, $D_2$, and $(D_1',D_2')$ implies that $\Cov(D_1^*, D_2^*) = \Cov(D'_1, D'_2)$ and $\Var(D_1^*) = \Var(D_1) + \Var(D_1')$, so that
\begin{equation}
 \label{eq:CorSimple}
 \Cor(D_1^*, D_2^*)
 \weq \frac{\Cov(D'_1, D'_2)}{\Var(D_1) + \Var(D_1')}. 
\end{equation}
By plugging \eqref{eq:VarPrime}--\eqref{eq:VarDeg} into \eqref{eq:CorSimple}, we conclude that
\[
 \Cor(D^*_1,D^*_2) 
 \weq \frac{\frac{P_{43} + P_{33}}{P_{21}} - \left( \frac{P_{32}}{P_{21}} \right)^2}
 {\frac{P_{43} + P_{32}}{P_{21}} - \left( \frac{P_{32}}{P_{21}} \right)^2 + \lambda \frac{P_{32} + P_{21} }{P_{10}}}.
\]
By recalling that $\lambda = \mu P_{10}$, the claim follows.
\qed
\end{proof}

\subsection{Proof of Lemma \ref{the:BivariateMomentInequality}}
Let $(X,Q)$ and $(Y,R)$ be mutually independent random vectors, both distributed according to $P$. Then
\begin{align*}
 P_{21} P_{43} + P_{21} P_{33} - P_{32}^2  
 \weq \E f(X,Q,Y,R),
\end{align*}
where
\[
 f(x, q, y, r) \weq (x)_2 q (y)_4 r^3 + (x)_2 q (y)_3 r^3 - (x)_3 q^2 (y)_3 r^2.
\]
On the other hand, by applying the identity $ (y)_4 + (y_3) = (y-2)^2 (y)_2$, we find that
\begin{align*}
 f(x, q, y, r) &\weq (x)_2 (y)_2 (y-2)^2 q r^3 - (x)_3 q^2 (y)_3 r^2 \\
 &\weq (x)_2 (y)_2 q r \Big( (y-2)^2 r^2 - (x-2)(y-2)qr \Big).
\end{align*}
Therefore,
\begin{align*}
 f(x, q, y, r) + f(y, r, x, q) \weq (x)_2 (y)_2 q r \Big( (x-2) q - (y-2) r \Big)^2 \wge 0.
\end{align*}
By symmetry, $\E f(X,Q,Y,R) = \E f(Y,R,X,Q)$, and hence we may conclude that
\[
P_{21} P_{43} + P_{21} P_{33} - P_{32}^2  \weq \frac12 \E \Big( f(X,Q,Y,R) + f(Y,R,X,Q) \Big) \wge 0.
\]
\qed

\subsection{Proof of Theorem \ref{the:ModelAssortativity}}

We only sketch the proof in the case where $\frac{m}{n} \to \mu \in (0,\infty)$. Let $(D_{1,n}^*, D_{2,n}^*)$
 be a random variable distributed according to the model bidegree distribution $f_{2,n}$ of $G_n$. By applying Theorem~\ref{the:LimitingModeBidegree}:\eqref{ite:Wasserstein} with $\phi(x,y) = x$, and then with $\phi(x,y) = x^2$, we find that $\Var(D_{1,n}^*) \to \Var(D_{1}^*)$. Observe next that for $\phi(x,y) = xy$, $\abs{\phi(x,y)} \le 2(x^2+y^2)$. Hence Theorem~\ref{the:LimitingModeBidegree}:\eqref{ite:Wasserstein} also implies that $\Cov(D_{1,n}^*,D_{2,n}^*) \to \Cov(D_{1}^*,D_{2}^*)$. Hence the claim follows by Proposition~\ref{the:LimitingBidegreeCorrelation}.
\qed

\subsection{Proof of Theorem \ref{the:RankCorrelation}}
%\begin{proof}
Because $f_{2,n}$ has identical marginals, we see that
\[
 \rhoken(f_{2,n})
 \weq \frac{\int \phi \, d (f_{2,n} \otimes f_{2,n}) - \left( \int \phi_1 \, d (f^{(1)}_{2,n} \otimes f^{(1)}_{2,n}) \right)^2 }
 {\int \phi_1^2 \, d (f^{(1)}_{2,n} \otimes f^{(1)}_{2,n}) - \left( \int \phi_1 \, d (f^{(1)}_{2,n} \otimes f^{(1)}_{2,n}) \right)^2},
 %\weq \frac{\int \sgn(x_1-y_1) \sgn(x_2-y_2) \, (f_{2,n} \otimes f_{2,n})(x_1,x_2,y_1,y_2) - \int \sgn(x_1-y_1)  }{} 
\]
where $\phi_1(x_1,y_1) = \sgn(x_1-y_1)$, $\phi(x_1,x_2,y_1,y_2) = \phi_1(x_1,y_1) \phi_1(x_2,y_2)$ are bounded (and trivially continuous) functions defined on $\Z_+^2$ and $\Z_+^4$, respectively. Theorem \ref{the:LimitingModeBidegree} implies that $f_{2,n} \to \bar f_2 $ weakly as probability measures on $\Z_+^2$. Hence also $f_{2,n} \otimes f_{2,n} \to \bar f_2 \otimes \bar f_2 $ and $f^{(1)}_{2,n} \otimes f^{(1)}_{2,n} \to \bar f^{(1)}_2 \otimes \bar f^{(1)}_2$ weakly.
We conclude that $\rhoken(f_{2,n}) \to \rhoken( \bar f_2 )$.

To verify the claim for Spearman's rank correlation, we apply the representation \cite[Section 4.3]{Neslehova_2007}
\begin{align*}
\rho_{\text{Spe}} (f_{2,n}) \!=\! 
\frac{  \pr( (X^{(1)}\!-\!Y^{(2)})(X^{(2)}\!-\!Z^{(2)}) \!>\! 0) \!-\! \pr((X^{(1)}\!-\!Y^{(2)})(X^{(2)}\!-\!Z^{(2)}) \!<\! 0)   }  { \frac{1}{3}\sqrt{1 - \pr(X^{(1)}=Y^{(1)}=Z^{(1)})} \sqrt{1 - \pr(X^{(2)}=Y^{(2)}=Z^{(2)})} },
\end{align*} 
%$(X_1,X_2), (Y_1,Y_2), (Z_1,Z_2)$
where $(X^{(1)},X^{(2)}), (Y^{(1)},Y^{(2)}), (Z^{(1)},Z^{(2)})$ are mutually independent and $f_{2,n}$-
distributed. By applying the formula $\pr(W > 0) - \pr(W < 0) = \E [ 1(W>0) - 1(W<0)] = \E \sgn(W) $ with $W = (X^{(1)}-Y^{(2)})(X^{(2)}-Z^{(2)})$, and noting that $f_{2,n}$ has identical marginals, this can be rewritten as 
\[
 \rhospe(f_{2,n})
 \weq 3 \frac{ \int \phi \, d( f_{2,n} \otimes f_{2,n} \otimes f_{2,n})  }
 {\int \psi \, d( f^{(1)}_{2,n} \otimes f^{(1)}_{2,n} \otimes f^{(1)}_{2,n}) },
\]
where $\phi(x_1,x_2,y_1,y_2,z_1,z_2) = \sgn( (x_1-y_2)(x_2-z_2) )$ and $\psi(x_1,y_1,z_1) = 1 - 1(x_1=y_1=z_1)$ are bounded (and trivially continuous) functions on $\Z_+^6$ and $\Z_+^3$, respectively. The second claim follows by noting that $f_{2,n} \otimes f_{2,n} \otimes f_{2,n} \to \bar f_2 \otimes \bar f_2 \otimes \bar f_{2}$ and $f^{(1)}_{2,n} \otimes f^{(1)}_{2,n} \otimes f^{(1)}_{2,n} \to \bar f^{(1)}_2 \otimes \bar f^{(1)}_2 \otimes \bar f^{(1)}_{2}$ weakly.
%\end{proof}
\qed

\section*{Acknowledgements}
This work was supported by COSTNET COST Action 15109. JK was supported by the Magnus Ehrnrooth Foundation.

\bibliographystyle{splncs04}
\bibliography{lslReferences2}

\end{document}